\newtheorem{theorem}{Theorem}[section]
\newtheorem{lemma}[theorem]{Lemma}
\newtheorem{corollary}[theorem]{Corollary}
\newtheorem{proposition}[theorem]{Proposition}
\theoremstyle{definition}
\theoremstyle{remark}
\newtheorem{remark}[theorem]{Remark}
\newtheorem{example}[theorem]{Example}
\newtheorem*{acknowledgements}{Acknowledgments}
\newcommand{\scal}[1]{\langle#1\rangle}
\newcommand{\ran}{\mathrm{ran}\,}
\newcommand{\dom}{\mathrm{dom}\,}
\newcommand{\img}{\mathrm{Im}\,}
\newcommand{\C}{\mathbb{C}}
\newcommand{\R}{\mathbb{R}}
\newcommand{\Z}{\mathbb{Z}}
\title{On operators which are adjoint to each other}
\author{Dan
POPOVICI\thanks{ Department of Mathematics, West University of
Timi\c soara, Bd. Vasile P\^arvan nr. 4, RO-300223 Timi\c soara,
Romania, \texttt{popovici@math.uvt.ro}}\qquad Zolt\'an SEBESTY\'EN\thanks{Department of Applied Analysis,
Lor\'and E\"otv\"os University, P\'azm\'any P\'eter s\'et\'any 1/C,
H-1117 Budapest, Hungary, \texttt{sebesty@cs.elte.hu}}}
\date{\small\it Dedicated to the memory of B\'ela Sz\"okefalvi-Nagy}
\begin{document}

\maketitle


\begin{abstract}
Given two linear operators $S$ and $T$ acting between Hilbert spaces $\mathscr{H}$ and $\mathscr{K}$, respectively $\mathscr{K}$ and $\mathscr{H}$ which satisfy the relation
\begin{equation*}
\langle Sh, k\rangle=\langle h, Tk\rangle, \quad h\in\dom S, \ k\in\dom T,
\end{equation*}
i.e., according to the classical terminology of M.H. Stone, which are adjoint to each other, we provide necessary and sufficient conditions in order to ensure the equality between the closure of $S$ and the adjoint of $T.$ A central role in our approach is played by the range of the operator matrix $M_{S, T}=\begin{pmatrix} 1_{\dom S} & -T\\ S & 1_{\dom T} \end{pmatrix}.$ We obtain, as consequences, several results characterizing skewadjointness, selfadjointness and essential selfadjointness. We improve, in particular, the celebrated selfadjointness criterion of J. von Neumann.

\medskip
\noindent\textsc{Keywords}: unbounded operator, operators which are adjoint to each other, symmetric, skewadjoint, selfadjoint, essentially selfadjoint, closable

\medskip
\noindent\textsc{2000 Mathematics Subject Classification}: 47A05, 47A20, 47B25
\end{abstract}

\maketitle

\section{Introduction}

The abstract spectral theory for unbounded selfadjoint operators has been initiated by J. von Neumann in \cite{neu30}, his work being motivated by its applications in quantum mechanics, a branch of physics developed in mid-twenties by P. Dirac, W. Heisenberg and E. Schr\"odinger.
In quantum mechanics the Hamiltonian $H$ of a system is the operator corresponding to the total energy of that system. The domain of this differential operator is sometimes unclear. It is usually not difficult to restrict its definition to some dense subspace of regular functions on which it becomes a symmetric operator. In our approach, a \emph{symmetric operator} $S$ is just a linear operator which is defined on a Hilbert space $\mathscr{H}$ and verifies the identity
\begin{equation}\label{eq1}
\scal{Sh,h'}=\scal{h,Sh'}
\end{equation} 
for every $h$ and $h'$ in the domain $\dom S$ of $S$. In other words, contrary to the usual terminology, we do not assume that a symmetric operator is densely defined.

The dynamics of a quantum system must be governed by a continuous one-parameter group of unitary operators. The infinitesimal generator of such a group is, according to Stone's theorem \cite{sto32}, selfadjoint i.e., it is densely defined and coincides with its adjoint. The \emph{adjoint operator} $S^*$ of a linear, densely defined, operator $S$ (which acts between Hilbert spaces $\mathscr{H}$ and $\mathscr{K}$) is defined by the formulas

\begin{itemize}
\item $\dom S^*:=\{ k\in \mathscr{K} : \langle Sh, k\rangle=\langle h, h'\rangle$ for a certain $h'\in\mathscr{H}$ and for each $h\in\dom S\};$
\item $S^*k:=h',$ where $h'$ is (uniquely) determined by $k$ and the (scalar product) equality above. 
\end{itemize}
Moreover, we have the following identities (see \cite{seb83} for the first and the third one): 
\begin{itemize}
\item  the domain of $S^*$: 
\begin{equation*} \dom S^*=\{ k\in\mathscr{K}: \sup\{ |\langle Sh, k\rangle| : h\in \dom S, \| h\|\le 1\}<\infty\};\end{equation*}
\item the kernel of $S^*:$ 
\begin{equation*} \ker S^*=\{ \ran S\}^\perp, \mbox{ i.e., the orthocomplement of the range } \ran S \mbox{ of } S;\end{equation*}
\item the range of $S^*:$
 \begin{equation}\label{eq0} \ran S^*=\{ h\in\mathscr{H}: \sup\{ |\langle h, h'\rangle| : h'\in \dom S, \| Sh'\|\le 1\}<\infty\}.\end{equation}
\end{itemize}
In the most general context it is sometimes necessary to specify domains of selfadjointness for physical observables such as position, momentum or spin. To this aim one needs to find or even to construct explicitly the selfadjoint extension of an appropriate symmetric operator. The mathematician has to study if this operator has such extensions, while the physicist has to choose among these extensions the one which is the most adequate for the system. In some particular situations the Hamiltonian of the system is essentially selfadjoint, i.e., it has a unique selfadjoint extension. As noted by T. Kato \cite{kat51}, this is the case for the so-called Schr\"odinger Hamiltonian operator of every system composed of a finite number of particles interacting with each other through a potential energy. In abstract settings, a linear, densely defined and closable operator $S$ between $\mathscr{H}$ and $\mathscr{K}$ is called \emph{essentially selfadjoint} if its closure $\bar S$ is selfadjoint. 

One can usually verify selfadjointness by computing the ranges of certain associated operators. We should mention in this sense that, according to the criterion of J. von Neumann \cite{neu30}, a densely defined symmetric operator $S$ acting on a complex Hilbert space $\mathscr{H}$ is selfadjoint if and only if the ranges of the operators $S\pm i$ equal $\mathscr{H}$. Similarly, $S$ is essentially selfadjoint if and only if the ranges of the operators $S\pm i$ are dense in $\mathscr{H}$. 
For basic facts about unbounded operators we refer to the Chapter 8 of the famous book \emph{Functional Analysis} by Frigyes Riesz and B\'ela Sz\"okefalvi-Nagy \cite{risz}.

The following result, stated by R. Arens \cite{are61} for the generalized case of multivalued operators (linear relations), is often useful in studying selfadjontness:
\begin{proposition}\label{p1.1}
Let $S$ and $T$ be two linear relations between linear spaces $\mathscr{X}$ and $\mathscr{Y}$ such that $S\subseteq T$. Then $S=T$ if and only if $S$ and $T$ have equal kernels and equal ranges.
\end{proposition}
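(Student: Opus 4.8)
The plan is to treat the two implications separately. The forward one is immediate: if $S=T$ as subspaces of $\mathscr{X}\times\mathscr{Y}$, then certainly $\ker S=\ker T$ and $\ran S=\ran T$. So all the content is in the converse, which I would prove by a short diagram chase in the product space.

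For the converse, assume $S\subseteq T$, $\ker S=\ker T$ and $\ran S=\ran T$; the goal is $T\subseteq S$. Pick an arbitrary $(x,y)\in T$. The first step exploits the range hypothesis: since $y\in\ran T=\ran S$, there is some $x_0$ with $(x_0,y)\in S$, hence also $(x_0,y)\in T$. The second step isolates the ``error'' in the first coordinate: because $T$ is a linear subspace and $(x,y),(x_0,y)\in T$, we get $(x-x_0,0)=(x,y)-(x_0,y)\in T$, i.e.\ $x-x_0\in\ker T$. The third step invokes the kernel hypothesis: $\ker T=\ker S$, so $(x-x_0,0)\in S$. Finally, adding back, $(x,y)=(x_0,y)+(x-x_0,0)\in S$ since $S$ is a subspace. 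As $(x,y)\in T$ was arbitrary, $T\subseteq S$, whence $S=T$.

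The point to be careful about — rather than a genuine obstacle — is simply that ``$S\subseteq T$'', ``kernel'' and ``range'' are all to be read in the linear-relation sense, i.e.\ for linear subspaces of $\mathscr{X}\times\mathscr{Y}$; this is precisely what makes the additions and subtractions of pairs above legitimate. No topological, completeness, or dimensional input is required, and the very same argument specialises verbatim to (single-valued) linear operators, where it is perhaps the more familiar form of the statement.
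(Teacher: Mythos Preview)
Your proof is correct. Note, however, that the paper does not supply its own proof of this proposition: it is stated in the Introduction as a known result of Arens \cite{are61} and is used later (in the proof of Theorem~\ref{t20}) as a black box. Your argument is the standard one and would serve perfectly well if a proof were to be included.
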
 

We follow the classical terminology of M. H. Stone \cite{stone} in order to extend \eqref{eq1} for the case of two linear operators $S$ and $T$ acting between Hilbert spaces $\mathscr{H}$ and $\mathscr{K}$, respectively $\mathscr{K}$ and $\mathscr{H}$: $S$ and $T$ are said to be \textit{adjoint to each other}, in symbols $S\wedge T$, if
\begin{equation*}
\scal{Sh,k}=\scal{h,Tk}
\end{equation*}
for every $h\in\dom S$ and $k\in\dom T$. In our approach the range of the operator matrix $M_{S,T}=\begin{pmatrix}1_{\dom S} & -T\\ S & 1_{\dom T}\end{pmatrix}$ is playing a central role. We firstly show that $S=T^*$ and $S^*=T$ if and only if $S\wedge T$ and $\ran(M_{S,T})=\mathscr{H}\times\mathscr{K}$ if and only if $S, T$ are closed and densely defined, $S\wedge T, \ran (1+ST)=\mathscr{K}$ and $\ran (1+TS)=\mathscr{H}.$ We obtain, as a consequence, that for a given operator $S$ on $\mathscr{H}$ the following conditions are equivalent:
\begin{itemize}
\item $S$ is selfadjoint;
\item $S\wedge S$ and $\ran M_{S, S}=\mathscr{H}$;
\item $S$ is closed and densely defined, $S\wedge S$ and $\ran (1+S^2)=\mathscr{H}.$
\end{itemize}
In the case when the Hilbert space $\mathscr{H}$ is complex we revise the von Neumann theorem (indicated above) without the assumption that the operator in discussion is ``a priori'' densely defined. More exactly, $S$ is selfadjoint if and only if $S\wedge S, \ran (\lambda i+S)=\mathscr{H}$ and $\overline{\ran (\lambda i-S)}=\mathscr{H}$ for a certain (and also for all) $\lambda\in\mathbb{R}, \lambda\ne 0.$ Similar characterizations are obtained for skewadjoint operators (i.e., operators $S$ which verify the relation $S^*+S=0$). Provided that $S$ and $T$ are adjoint to each other and $M_{S, T}$ has dense range, we prove, in the final section, that $T$ has dense range if and only if $S$ is closable and, if either of these statements is satisfied, the closure of $S$ equals the adjoint of $T.$ As applications we obtain new conditions which are equivalent to essential selfadjointness. To be more precise, a given linear operator $S$ on $\mathscr{H}$ is essentially selfadjoint if and only if $S$ is densely defined/closable, $S\wedge S$ and the range of $M_{S, S}$ is dense. This last condition can be also replaced by $-\lambda^2\notin\sigma_p(S^{*2})$ for a certain (and also for all) $\lambda\in\mathbb{R}, \lambda\ne 0.$ Also, using another approach based on the Arens result (Proposition \ref{p1.1}), we prove that $S$ is essentially selfadjoint if and only if $S$ is closable, $S\wedge S, \{ \ran S\}^\perp=\ker \bar{S}$ and $\{ h\in\mathscr{H} | \sup \{ | \langle h, h'\rangle : h'\in \dom S$ and $\| Sh'\|\le 1\}<\infty\}=\ran \bar{S}.$ Similar characterizations for (essentially) selfadjoint operators were obtained by Z. Sebesty\'en and Zs. Tarcsay in \cite{stess, stsa}.

We should also note that, in our approach (i.e., $S$ and $T$ are adjoint to each other), the matrices 
$M_{-S,T}$ and $M_{S,-T}$ are both symmetric. In some earlier work the role of these matrices has been played by some $2\times 2$ symmetric off-diagonal matrices (see, for example, \cite{hkm, HM, M, SS, T}).

Corresponding results also hold true for the generalized case of linear relations. This case will be treated in another paper.  

\section{The operator matrix $M_{S,T}$}

Let $\mathscr{H}$ and $\mathscr{K}$ be two Hilbert spaces. The \emph{graph of a linear operator $S$} between Hilbert spaces $\mathscr{H}$ and $\mathscr{K}$ is, as usual, the linear subspace 
\begin{equation*}
G(S):=\{ (h, Sh) : h\in \dom S\}\subseteq \mathscr{H}\times \mathscr{K}. 
\end{equation*}
The \emph{flip operator}, which has its role in computations with linear operator graphs, is defined by the formula
\begin{equation*}
\mathscr{H}\times\mathscr{K}\ni (h, k)\mapsto F_{\mathscr{H}, \mathscr{K}}(h, k):=(k, -h)\in \mathscr{K}\times\mathscr{H}.
\end{equation*} 
It is not hard to check that $F_{\mathscr{H}, \mathscr{K}}$ is a unitary operator and $F^*_{\mathscr{H}, \mathscr{K}}=-F_{\mathscr{K}, \mathscr{H}}.$

It is well-known (cf., for example, \cite[page 304]{risz}), under the assumption that  $S$ is densely defined, that the equality
\begin{equation}\label{eq2}
\overline{G(S)}\oplus F^*_{\mathscr{H}, \mathscr{K}}G(S^*)=\mathscr{H}\times \mathscr{K}
\end{equation}
holds true (the symbol ``$\oplus$'' denotes an orthogonal sum).

The first result computes the orthocomplement of the range of the operator matrix $M_{S, T}.$

\begin{proposition}\label{p1}
Let $S$ and $T$ be two given linear operators acting between $\mathscr{H}$ and $\mathscr{K}$, respectively $\mathscr{K}$ and $\mathscr{H}.$ Then
\begin{equation*}
\{ \ran (M_{S, T})\}^\perp=G(S)^\perp\cap\{ F_{\mathscr{K}, \mathscr{H}}G(T)\}^\perp.
\end{equation*}
\end{proposition}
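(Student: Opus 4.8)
The plan is to unwind the definition of $M_{S,T}$ and compute the orthocomplement of its range directly. For $h\in\dom S$ and $k\in\dom T$ one has $M_{S,T}(h,k)=(h-Tk,\,Sh+k)\in\mathscr{H}\times\mathscr{K}$, so a pair $(u,v)\in\mathscr{H}\times\mathscr{K}$ lies in $\{\ran(M_{S,T})\}^\perp$ exactly when
\[
\langle h-Tk,u\rangle+\langle Sh+k,v\rangle=0\qquad\text{for all }h\in\dom S,\ k\in\dom T.
\]
I would expand the left-hand side and regroup it as $\bigl(\langle h,u\rangle+\langle Sh,v\rangle\bigr)+\bigl(\langle k,v\rangle-\langle Tk,u\rangle\bigr)$, the first summand depending only on $h$ and the second only on $k$. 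Since each summand is a (conjugate-)linear functional of its variable and hence vanishes when that variable is $0$, setting $k=0$ and then $h=0$ shows that the displayed identity is equivalent to the pair of conditions
\[
\langle h,u\rangle+\langle Sh,v\rangle=0\ \ (h\in\dom S),\qquad \langle Tk,u\rangle-\langle k,v\rangle=0\ \ (k\in\dom T).
\]

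It then remains to identify each of these. The first condition reads $\langle (h,Sh),(u,v)\rangle_{\mathscr{H}\times\mathscr{K}}=0$ for every $h\in\dom S$, i.e.\ $(u,v)\in G(S)^\perp$. For the second, recall that $F_{\mathscr{K},\mathscr{H}}$ maps $(k,Tk)\in G(T)\subseteq\mathscr{K}\times\mathscr{H}$ to $(Tk,-k)\in\mathscr{H}\times\mathscr{K}$, so $\langle Tk,u\rangle-\langle k,v\rangle=\langle F_{\mathscr{K},\mathscr{H}}(k,Tk),(u,v)\rangle_{\mathscr{H}\times\mathscr{K}}$; hence the second condition says precisely $(u,v)\perp F_{\mathscr{K},\mathscr{H}}G(T)$. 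Combining the two equivalences gives $\{\ran(M_{S,T})\}^\perp=G(S)^\perp\cap\{F_{\mathscr{K},\mathscr{H}}G(T)\}^\perp$, as claimed.

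This is a matter of careful bookkeeping rather than of a substantial idea: no hypothesis on $S$ and $T$ is needed here (not even that they are adjoint to each other, nor that they are densely defined or closable). The only points deserving attention are keeping straight which component of $\mathscr{H}\times\mathscr{K}$ each term belongs to — so that the flip operator appearing is $F_{\mathscr{K},\mathscr{H}}$ and not $F_{\mathscr{H},\mathscr{K}}$ — and the elementary observation that an identity of the form $A(h)+B(k)\equiv 0$ in two independent variables forces $A\equiv 0$ and $B\equiv 0$. I do not anticipate any genuine obstacle.
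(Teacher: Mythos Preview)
Your proof is correct and follows essentially the same approach as the paper's: both compute $\{\ran(M_{S,T})\}^\perp$ directly by writing out the orthogonality condition, then set $k=0$ and $h=0$ in turn to split it into the two conditions $(u,v)\perp G(S)$ and $(u,v)\perp F_{\mathscr{K},\mathscr{H}}G(T)$. Your version is slightly more explicit in identifying the second condition with the flip operator, but the argument is the same.
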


\begin{proof}
For $u\in \mathscr{H}, v\in\mathscr{K}$ the property
\begin{equation*}
(u, v)\in \{ \ran (M_{S, T})\}^\perp
\end{equation*}
is equivalent to the following identity
\begin{equation*}
\langle u, h-Tk \rangle + \langle v, Sh+k \rangle=0, \quad h\in \dom S,\ k \in \dom T.
\end{equation*}
One can take, on one hand, $h\in\dom S$ (arbitrary) and $k=0$ and, on the other hand, $h=0$ and $k\in\dom T$ (arbitrary) to see that the equalities
\begin{equation*}
\langle u, h\rangle + \langle v, Sh\rangle=\langle u, Tk\rangle - \langle v, k\rangle=0,\quad h\in\dom S,\ k\in\dom T
\end{equation*}
are also satisfied, i.e., $(u, v)\in G(S)^\perp\cap \{ F_{\mathscr{K}, \mathscr{H}}G(T)\}^\perp.$
\end{proof}

In our approach the situation when $M_{S, T}$ has full or at least dense range is discussed. 

We start with some necessary conditions for the surjectivity of $M_{S, T}.$

\begin{remark}\label{r1p}
Assume that $\ran(M_{S, T})=\mathscr{H}\times\mathscr{K}$ for two given linear operators $S$ (acting between $\mathscr{H}$ and $\mathscr{K}$) and $T$ (acting between $\mathscr{K}$ and $\mathscr{H}$). Then 
\begin{equation*}
\ran(1_{\dom(ST)}+ST)=\mathscr{K} \mbox{ and } \ran(1_{\dom(TS)}+TS)=\mathscr{H}.
\end{equation*}
Indeed, for every given $h\in\mathscr{H}$ one can find $h'\in\dom S$ and $k'\in\dom T$ such that
\begin{equation*}
h=h'-Tk' \mbox{ and } k'=-Sh'.
\end{equation*}
It follows that $h'\in\dom(TS)$ and $h=h'+TSh'.$ Hence $\ran(1_{\dom(TS)}+TS)=\mathscr{H}$ and, similarly, $\ran(1_{\dom(ST)}+ST)=\mathscr{K}.$ If $S$ and $T$ are closed and densely defined, and the resolvent sets of the operators $ST$ and $TS$ are non-empty it was noted by F. Philipp, A. Ran and M. Wojtylak in \cite{prw} that the surjectivity of $1_{\dom(ST)}+ST$ is equivalent with the surjectivity of $1_{\dom(TS)}+TS.$\qed
\end{remark}

In particular, the case $S=T^*$ is presented in the following proposition.

\begin{proposition}\label{p2}
Let $T$ be a densely defined linear operator between $\mathscr{K}$ and $\mathscr{H}.$ Then
\begin{itemize}
\item[$(a)$] $\overline{\ran (M_{T^*, T})}=\mathscr{H}\times\mathscr{K};$
\item[$(b)$] $\ran (M_{T^*, T})=\mathscr{H}\times\mathscr{K}$ if and only if $T$ is closed.
\end{itemize}
\end{proposition}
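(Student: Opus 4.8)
The key observation is that, for a densely defined operator $T$, we have $T^* \wedge T$ (this is essentially the definition of the adjoint: $\langle T^*h, k\rangle = \langle h, Tk\rangle$ for $h\in\dom T^*$, $k\in\dom T$). Hence, by Proposition \ref{p1},
\begin{equation*}
\{\ran(M_{T^*, T})\}^\perp = G(T^*)^\perp \cap \{F_{\mathscr{K}, \mathscr{H}}G(T)\}^\perp.
\end{equation*}
I would rewrite the second set using the flip operator identities: $F_{\mathscr{K},\mathscr{H}}^* = -F_{\mathscr{H},\mathscr{K}}$, so $\{F_{\mathscr{K},\mathscr{H}}G(T)\}^\perp = F_{\mathscr{K},\mathscr{H}}\{G(T)^\perp\}$... more usefully, $(u,v)\perp F_{\mathscr{K},\mathscr{H}}G(T)$ means $\langle u, Tk\rangle - \langle v, k\rangle = 0$ for all $k\in\dom T$, i.e. $\langle(u,v), (Tk, -k)\rangle = 0$... let me instead note $F_{\mathscr{K},\mathscr{H}}G(T) = \{(Tk, -k) : k\in\dom T\}$, and its orthocomplement in $\mathscr{H}\times\mathscr{K}$ is, after applying the unitary $F_{\mathscr{H},\mathscr{K}}^{-1}$ or directly, exactly $F_{\mathscr{H},\mathscr{K}}^*\overline{G(T^*)}$-type expression coming from \eqref{eq2}. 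Concretely: $\overline{G(T)}\oplus F^*_{\mathscr{K},\mathscr{H}}G(T^*) = \mathscr{K}\times\mathscr{H}$ by \eqref{eq2} applied to $T$ (which is densely defined); applying the unitary $-F_{\mathscr{K},\mathscr{H}}$ to this identity and using $F^*_{\mathscr{K},\mathscr{H}} = -F_{\mathscr{H},\mathscr{K}}$, one gets a decomposition of $\mathscr{H}\times\mathscr{K}$ identifying $\{F_{\mathscr{K},\mathscr{H}}G(T)\}^\perp$ with $\overline{G(T^*)}$.

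For part $(a)$: from the previous paragraph, $\{F_{\mathscr{K},\mathscr{H}}G(T)\}^\perp$ should work out to be $\overline{G(T^*)}$ (or possibly $G(T^*)$ if $T^*$ is already closed — which it always is, being an adjoint). Then $\{\ran(M_{T^*,T})\}^\perp = G(T^*)^\perp \cap \overline{G(T^*)} = G(T^*)^\perp \cap G(T^*) = \{(0,0)\}$, using that $G(T^*)$ is closed. Hence $\ran(M_{T^*,T})$ is dense, proving $(a)$.

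For part $(b)$: surjectivity of $M_{T^*,T}$ is equivalent to its range being closed (given density from $(a)$), and I would relate closedness of $\ran(M_{T^*,T})$ to closedness of $G(T)$, i.e. of $T$. The cleanest route: $M_{T^*,T}$ maps $\dom T^* \times \dom T$ onto $\mathscr{H}\times\mathscr{K}$; decompose an arbitrary $(h,k)\in\mathscr{H}\times\mathscr{K}$. Using \eqref{eq2} for $T$: $(k', h') \in \mathscr{K}\times\mathscr{H}$ decomposes (after the flip) as a sum of an element of $G(T^*)$ and $F_{\mathscr{K},\mathscr{H}}\overline{G(T)}$; if $T$ is closed this is $F_{\mathscr{K},\mathscr{H}}G(T)$, and unwinding gives $h\in\dom T^*$, $k\in\dom T$ with $M_{T^*,T}$ hitting $(h,k)$ — so $T$ closed $\Rightarrow$ $M_{T^*,T}$ surjective. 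Conversely, if $M_{T^*,T}$ is surjective then by Remark \ref{r1p} applied with $S=T^*$ we get $\ran(1+T^*T) = \mathscr{H}$; combined with the von Neumann theory of $T^*T$ one deduces $T$ is closed. Actually a more self-contained converse: if $M_{T^*,T}$ is surjective, run the decomposition argument backwards to show $F_{\mathscr{K},\mathscr{H}}G(T)$ is already closed (it is the image of $\dom T \subseteq \dom T \times \dom T^*$ under an explicit map whose surjectivity forces the graph to be all of the relevant closed complement), hence $G(T)$ closed.

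The main obstacle I anticipate is bookkeeping with the flip operators and the two Hilbert spaces $\mathscr{H}\times\mathscr{K}$ versus $\mathscr{K}\times\mathscr{H}$ — getting the orthogonal-complement identity $\{F_{\mathscr{K},\mathscr{H}}G(T)\}^\perp = \overline{G(T^*)}$ (inside $\mathscr{H}\times\mathscr{K}$) exactly right, with the correct signs, from \eqref{eq2}. Once that identity is in hand, part $(a)$ is immediate from Proposition \ref{p1} and the closedness of graphs of adjoints, and part $(b)$ follows by tracking when the dense range $\ran(M_{T^*,T})$ is actually all of $\mathscr{H}\times\mathscr{K}$, which forces (and is forced by) the closedness of $G(T)$.
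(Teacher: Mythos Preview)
Your plan matches the paper's proof almost exactly. For $(a)$, the paper applies $F_{\mathscr{K},\mathscr{H}}$ to \eqref{eq2} (written for $T$) to get the decomposition $\mathscr{H}\times\mathscr{K}=F_{\mathscr{K},\mathscr{H}}\overline{G(T)}\oplus G(T^*)$, from which $\{F_{\mathscr{K},\mathscr{H}}G(T)\}^\perp=G(T^*)$ and then Proposition~\ref{p1} yields $\{\ran M_{T^*,T}\}^\perp=G(T^*)^\perp\cap G(T^*)=\{0\}$, precisely as you outline. For $(b)$, the paper proves ``surjective $\Rightarrow$ $T$ closed'' by your self-contained route (take $(k,h)\in\overline{G(T)}$, apply surjectivity of $M_{T^*,T}$ to a suitably shifted vector, and use directness of the orthogonal sum to force $(k,h)\in G(T)$), while it handles ``$T$ closed $\Rightarrow$ surjective'' by citing Birman--Solomjak rather than unwinding the decomposition as you propose.

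One caution: your first idea for the converse in $(b)$ --- getting $\ran(1+T^*T)=\mathscr{K}$ from Remark~\ref{r1p} and then invoking von Neumann's theory of $T^*T$ to conclude $T$ is closed --- is circular, since von Neumann's result that $T^*T$ is selfadjoint (and hence that $1+T^*T$ has nice invertibility properties) already \emph{assumes} $T$ is closed. You were right to replace it with the decomposition argument.
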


\begin{proof}
$(a)$ We firstly note that, according to \eqref{eq2} (written for $T$ instead of $S$),
\begin{equation}\label{eq2p}
\mathscr{H}\times\mathscr{K}=F_{\mathscr{K}, \mathscr{H}}(\mathscr{K}\times\mathscr{H})=F_{\mathscr{K}, \mathscr{H}} \overline{G(T)}\oplus G(T^*).
\end{equation}
Then, by Proposition \ref{p1}, 
\begin{equation*}
\ran (M_{T^*, T})^\perp=G(T^*)^\perp\cap \{ F_{\mathscr{K}, \mathscr{H}}G(T)\}^\perp=G(T^*)^\perp\cap G(T^*)=\{ 0 \},
\end{equation*}
as required.

$(b)$ If $M_{T^*, T}$ has full range then for a given $(k, h)\in\overline{G(T)}$ and for $h'\in\dom T^*$ one can find $h''\in\dom T^*$ and $k''\in\dom T$ such that 
\begin{equation*}
(h+h', T^*h'-k)=(h''-Tk'', T^*h''+k'').
\end{equation*}
This formula can be rewritten in equivalent form as 
\begin{equation*}
(h', T^*h')+F_{\mathscr{K}, \mathscr{H}}(k, h)=(h'', T^*h'')+F_{\mathscr{K}, \mathscr{H}}(-k'', T(-k'')).
\end{equation*}
Since the sum in the right side of \eqref{eq2p} is orthogonal (hence also direct) we deduce that $(k, h)=(-k'', T(-k''))\in G(T).$ Consequently, $T$ is a closed operator.

The converse implication follows by \cite[Theorem 8/page 71]{biso}.
\end{proof}

By Proposition \ref{p1}, $M_{S, T}$ has dense range if and only if
\begin{equation}\label{eq3}
\overline{\overline{G(S)}+F_{\mathscr{K}, \mathscr{H}}\overline{G(T)}}=\mathscr{H}\times\mathscr{K}.
\end{equation}
If $S$ and $T$ are densely defined it is possible to obtain a spectral characterization of this property.
To this aim, let $h\in\mathscr{H}$ and $k\in\mathscr{K}$ and observe that the property
\begin{equation*}
(h, k)\in G(S)^\perp\cap\{ F_{\mathscr{K}, \mathscr{H}} G(T) \}^\perp
\end{equation*}
is equivalent, in view of \eqref{eq2} and \eqref{eq2p}, with the following relations
\begin{equation*}
(k, -h)\in G(S^*) \mbox{ and } (h, k)\in G(T^*).
\end{equation*}
In other words, the statements $h\in\dom T^*, k\in \dom S^*, -h=S^*k$ and $k=T^*h$ are also satisfied. This implies that $h\in \dom(S^*T^*)$ and
\begin{equation*}
-h=S^*k=S^*T^*h.
\end{equation*}
Hence $h\in\ker(1_{\dom(S^*T^*)}+S^*T^*)$ and $(h, k)\in G(T^*|_{\ker(1_{\dom(S^*T^*)}+S^*T^*)}).$ 
Conversely, if $h\in \dom(S^*T^*)$ and $-h=S^*T^*h$ then $h\in\dom T^*, k:=T^*h\in\dom S^*$ and $S^*k=-h,$ so $(h,k)\in\{\ran(M_{S,T})\}^\perp$.
It follows that
\begin{equation*}
\{ \ran(M_{S, T})\}^\perp=G(T^*|_{\ker(1_{\dom(S^*T^*)}+S^*T^*)})
\end{equation*}
and, similarly,
\begin{equation*}
\{ \ran(M_{S, T})\}^\perp=F_{\mathscr{K}, \mathscr{H}}G(S^*|_{\ker(1_{\dom(T^*S^*)}+T^*S^*)}).
\end{equation*}

We deduce, in particular, the following

\begin{proposition}\label{p3}
Let $S$ and $T$ be densely defined linear operators acting between $\mathscr{H}$ and $\mathscr{K}$, respectively $\mathscr{K}$ and $\mathscr{H}.$ The following statements are equivalent:
\begin{itemize}
\item[$(i)$] $\overline{\ran(M_{S, T})}=\mathscr{H}\times\mathscr{K};$
\item[$(ii)$] $-1\not\in \sigma_p(S^*T^*);$
\item[$(iii)$] $-1\not\in\sigma_p(T^*S^*).$
\end{itemize}
\end{proposition}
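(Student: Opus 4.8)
The plan is to deduce all three equivalences directly from the two identities for $\{\ran(M_{S,T})\}^\perp$ established in the discussion immediately preceding the statement, namely
\begin{equation*}
\{ \ran(M_{S, T})\}^\perp=G\bigl(T^*|_{\ker(1_{\dom(S^*T^*)}+S^*T^*)}\bigr)=F_{\mathscr{K}, \mathscr{H}}\,G\bigl(S^*|_{\ker(1_{\dom(T^*S^*)}+T^*S^*)}\bigr).
\end{equation*}
First I would observe that, since $\mathscr{H}\times\mathscr{K}$ is a Hilbert space, statement $(i)$ is equivalent to $\{\ran(M_{S,T})\}^\perp=\{(0,0)\}$; thus everything reduces to deciding when the two subspaces displayed above are trivial.

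Next I would invoke the elementary fact that, for an arbitrary linear operator $A$, the graph $G(A)$ reduces to $\{(0,0)\}$ if and only if $\dom A=\{0\}$ (indeed $(h,Ah)=(0,0)$ forces $h=0$, and conversely). Applied to $A=T^*|_{\ker(1_{\dom(S^*T^*)}+S^*T^*)}$, whose domain is exactly $\ker(1_{\dom(S^*T^*)}+S^*T^*)$, this shows that $\{\ran(M_{S,T})\}^\perp=\{(0,0)\}$ is equivalent to $\ker(1_{\dom(S^*T^*)}+S^*T^*)=\{0\}$, i.e. to $-1\notin\sigma_p(S^*T^*)$. This proves $(i)\Leftrightarrow(ii)$.

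The equivalence $(i)\Leftrightarrow(iii)$ would be obtained in the same manner from the second identity, after remarking that $F_{\mathscr{K},\mathscr{H}}$ is unitary, hence injective, so that $F_{\mathscr{K},\mathscr{H}}\,G\bigl(S^*|_{\ker(1_{\dom(T^*S^*)}+T^*S^*)}\bigr)$ is trivial if and only if $G\bigl(S^*|_{\ker(1_{\dom(T^*S^*)}+T^*S^*)}\bigr)$ is, which by the same domain argument is equivalent to $-1\notin\sigma_p(T^*S^*)$.

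I do not expect any serious obstacle here. The substantive work — expressing $\{\ran(M_{S,T})\}^\perp$ through the point spectra of $S^*T^*$ and $T^*S^*$ — has already been done before the statement, and it rests only on the orthogonal decompositions \eqref{eq2} and \eqref{eq2p}, which are available precisely because $S$ and $T$ are densely defined. What remains is the bookkeeping remark that a graph is trivial iff its domain is trivial, together with the observation that a unitary operator preserves triviality of a subspace. The one point that would require care in a fully self-contained write-up is re-deriving the two orthocomplement identities in both inclusions, but this has been carried out in the paragraph above the statement.
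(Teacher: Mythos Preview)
Your proposal is correct and follows exactly the paper's approach: the paper presents Proposition~\ref{p3} with the words ``We deduce, in particular, the following'' immediately after establishing the two orthocomplement identities you quote, so the proposition is stated without a separate proof and is meant to be read off from those identities. Your write-up simply makes explicit the bookkeeping (a graph is trivial iff its domain is, and $F_{\mathscr{K},\mathscr{H}}$ is injective) that the paper leaves implicit.
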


\begin{corollary}\label{c4}
Let $S$ and $T$ be closed and densely defined linear operators between $\mathscr{H}$ and $\mathscr{K}$, respectively $\mathscr{K}$ and $\mathscr{H}$ such the resolvent sets\footnote{The resolvent set $\rho(S)$ of an unbounded operator $S$ on $\mathcal{H}$ is defined 
as the set of $\lambda\in\mathbb{C}$ for which the operator $S-\lambda$ is bijective from $\dom S$ onto $\mathcal{H}$ and the inverse operator $(S-\lambda)^{-1}$ is bounded on $\mathcal{H}$} of the operators $ST$ and $TS$ are non-empty. The following statements are equivalent:
\begin{itemize}
\item[$(i)$] $\overline{\ran (M_{S, T})}=\mathscr{H}\times\mathscr{K};$
\item[$(ii)$] $\overline{\ran(1_{\dom(ST)}+ST)}=\mathscr{K};$
\item[$(iii)$] $\overline{\ran(1_{\dom(TS)}+TS)}=\mathscr{H}.$
\end{itemize}
\end{corollary}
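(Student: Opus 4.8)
The plan is to obtain all three equivalences from Proposition~\ref{p3}, by rewriting the point-spectrum conditions on $S^*T^*$ and $T^*S^*$ as the dense-range conditions on $1_{\dom(TS)}+TS$ and $1_{\dom(ST)}+ST$. The bridge between the two descriptions is the pair of identities $(ST)^*=T^*S^*$ and $(TS)^*=S^*T^*$ (which in particular forces $ST$ and $TS$ to be densely defined); establishing them from the running hypotheses --- $S,T$ closed and densely defined, $\rho(ST)\neq\emptyset$ and $\rho(TS)\neq\emptyset$ --- is where the real work lies.

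Here is the order of steps. By Proposition~\ref{p3}, $(i)$ is equivalent to $\ker(1+S^*T^*)=\{0\}$ and to $\ker(1+T^*S^*)=\{0\}$. The inclusions $T^*S^*\subseteq(ST)^*$ and $S^*T^*\subseteq(TS)^*$ hold automatically, and a short computation with scalar products turns them into $\ker(1+T^*S^*)\subseteq\{\ran(1_{\dom(ST)}+ST)\}^\perp$ and $\ker(1+S^*T^*)\subseteq\{\ran(1_{\dom(TS)}+TS)\}^\perp$; this already yields $(ii)\Rightarrow(i)$ and $(iii)\Rightarrow(i)$, with no appeal to the resolvent hypotheses. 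For the converse implications I would invoke $(ST)^*=T^*S^*$: since it entails that $1_{\dom(ST)}+ST$ is densely defined with adjoint $1+(ST)^*=1+T^*S^*$, the general rule $\ker A^*=\{\ran A\}^\perp$ yields
\begin{equation*}
\overline{\ran(1_{\dom(ST)}+ST)}=\mathscr{K}\iff\ker(1+(ST)^*)=\{0\}\iff\ker(1+T^*S^*)=\{0\},
\end{equation*}
hence $(ii)\Leftrightarrow(i)$; the analogous chain, based on $(TS)^*=S^*T^*$, gives $(iii)\Leftrightarrow(i)$, and the proof is finished.

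The main obstacle is this last step: showing, from ``$S,T$ closed densely defined with $\rho(ST)\neq\emptyset$ and $\rho(TS)\neq\emptyset$'', that $ST$ and $TS$ are densely defined and that $(ST)^*=T^*S^*$, $(TS)^*=S^*T^*$. The point is genuine --- a closed operator with non-empty resolvent set need not be densely defined, and $(ST)^*$ may strictly contain $T^*S^*$ when no resolvent condition is imposed. I would base the argument on the spectral correspondence $\sigma(ST)\setminus\{0\}=\sigma(TS)\setminus\{0\}$ for closed densely defined $S,T$, the circle of ideas underlying the observation of Philipp, Ran and Wojtylak \cite{prw} recalled in Remark~\ref{r1p}: then $\rho(ST)$ and $\rho(TS)$ are open and differ at most at $0$, so one may pick $\mu$ in their intersection; writing $R_1=(ST-\mu)^{-1}$ and $R_2=(TS-\mu)^{-1}$ (bounded operators on $\mathscr{K}$ and $\mathscr{H}$ respectively), one expresses each of $R_1$, $R_2$ through the other and through $S$, $T$, deduces that $\dom(ST)=\ran R_1$ and $\dom(TS)=\ran R_2$ are dense, and reads off the two adjoint identities by applying the adjoint calculus for bounded operators to $R_1$, $R_2$ together with $S^{**}=S$, $T^{**}=T$. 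If this explicit route turns out to be cumbersome, the adjoint identities under a non-empty resolvent assumption can instead be quoted from the literature on products of unbounded operators.
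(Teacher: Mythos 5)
Your proposal is correct and follows essentially the same route as the paper: both reduce the corollary to Proposition~\ref{p3} by means of the identities $(ST)^*=T^*S^*$ and $(TS)^*=S^*T^*$ (together with the density of $\dom(ST)$ and $\dom(TS)$), which the paper simply quotes from the references of Hardt--Konstantinov--Mennicken and Hardt--Mennicken, and then translates the kernel conditions into range conditions via $\ker A^*=\{\ran A\}^\perp$. Your extra observations --- that $(ii)\Rightarrow(i)$ and $(iii)\Rightarrow(i)$ already follow from the automatic inclusions $T^*S^*\subseteq(ST)^*$ and $S^*T^*\subseteq(TS)^*$ without the resolvent hypothesis, and your sketched resolvent argument for the quoted adjoint identities --- are minor refinements of, not departures from, the paper's argument.
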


\begin{proof}
It was proved in \cite{hkm} and \cite{HM} that, under these hypotheses, $ST$ and $TS$ are densely defined, $(ST)^*=T^*S^*$ and $(TS)^*=S^*T^*.$ Also, by \cite{prw}, $1_{\dom(ST)}+ST$ has dense range if and only if $1_{\dom(TS)}+TS$ has dense range. In addition, 
\begin{eqnarray*}
\ker(1_{\dom(T^*S^*)}+T^*S^*)&=&\ran(1_{\dom(ST)}+ST)^\perp\\
&=&\ran(1_{\dom(TS)}+TS)^\perp\\
&=&\ker(1_{\dom(S^*T^*)}+S^*T^*).
\end{eqnarray*}
The conclusion follows by Proposition \ref{p3}.
\end{proof}

\begin{remark}\label{r5}
Similar results can be obtained if we replace $M_{S, T}$ by an operator matrix of the form
\begin{equation*}
M_{S, T} (\lambda):=\begin{pmatrix}
\lambda & -T\\ 
S & \lambda
\end{pmatrix}
\end{equation*}
defined for every given real or complex number $\lambda\ne 0.$ More precisely, since $M_{S, T}(\lambda)=\lambda M_{\frac{1}{\lambda}S, \frac{1}{\lambda}T},$ 
\begin{equation*}
\{ \ran (M_{S, T}(\lambda))\}^\perp=G\left(\frac{1}{\lambda}S\right)^\perp\cap\left\{ F_{\mathscr{K}, \mathscr{H}}G\left(\frac{1}{\lambda}T\right)\right\}^\perp
\end{equation*}
and, in the particular case when $S$ and $T$ are densely defined,
\begin{equation*}
\{ \ran (M_{S, T}(\lambda))\}^\perp=G\left(\frac{1}{\bar{\lambda}}T^*|_{\ker(\bar{\lambda}^2+S^*T^*)}\right)=F_{\mathscr{K}, \mathscr{H}}G\left(\frac{1}{\bar{\lambda}}S^*|_{\ker(\bar{\lambda}^2+T^*S^*)}\right).
\end{equation*}
It follows that $M_{S, T}(\lambda)$ has dense range if and only if $-\bar{\lambda}^2\in\sigma_p(S^*T^*)$ (respectively, $-\bar{\lambda}^2\in\sigma_p(T^*S^*)$). If, moreover, $S$ and $T$ are closed and the resolvent sets of the operators $ST$ and $TS$ are non-empty then $M_{S, T}(\lambda)$ has dense range if and only if $\lambda^2+ST$ (respectively, $\lambda^2+TS$) has dense range.\qed
\end{remark}

\begin{remark}\label{r27}
In the following sections it is sometimes useful to express $S\wedge T$ in terms of similar properties for operator matrices of the form $M_{S,T}(\lambda)$ with $\lambda\in\C$. More precisely, the following statements are equivalent:
\begin{itemize}
\item[$(i)$] $S\wedge T$;
\item[$(ii)$] $M_{S,T}(\lambda)\wedge M_{-S,-T}(\bar\lambda)$
for a certain (and also for all) $\lambda\in\C$.
\end{itemize}
Indeed, condition $(ii)$ takes the form
\begin{multline*}
\langle \lambda h-Tk, h'\rangle+ \langle Sh +\lambda k, k'\rangle=\langle h, \bar{\lambda} h'+ Tk'\rangle+\langle k, -Sh'+ \bar{\lambda} k'\rangle, \\ h, h'\in \dom S, \ k, k'\in \dom T.
\end{multline*}

This equality can be immediately rewritten as
\begin{multline*}
\langle Sh, k'\rangle-\langle h, Tk'\rangle=\langle Tk, h'\rangle-\langle k, Sh'\rangle,\quad
h, h'\in \dom S, \ k, k'\in\dom T,
\end{multline*}
that is, as $S\wedge T.$\qed
\end{remark}

\section{Closed operators}

The main tools to be used in this section are given by the following auxiliary result.

\begin{lemma}\label{l6}
Let $S$ and $T$ be linear operators acting between $\mathscr{H}$ and $\mathscr{K}$, respectively $\mathscr{K}$ and $\mathscr{H}.$ The following statements are equivalent:

\noindent
$(i)$
\begin{itemize}
\item[$(a)$] $\ran S=\mathscr{K};$
\item[$(b)$] $\overline{\ran T}=\mathscr{H};$
\item[$(c)$] $S \wedge T;$
\end{itemize}

\noindent
$(ii)$
\begin{itemize}
\item[$(a)$] $S$ is bounded from below;
\item[$(b)$] $\overline{\ran S}=\mathscr{K};$
\item[$(c)$] $\overline{\dom T}=\mathscr{K};$
\item[$(d)$] $S=T^*.$
\end{itemize}
\end{lemma}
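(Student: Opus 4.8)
The plan is to prove the two implications $(i)\Rightarrow(ii)$ and $(ii)\Rightarrow(i)$ separately, with the first being the substantive direction.

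For $(i)\Rightarrow(ii)$: assume $\ran S=\mathscr{K}$, $\overline{\ran T}=\mathscr{H}$ and $S\wedge T$. First I would show $S$ is bounded from below. Since $S$ is surjective, for each $k\in\mathscr{K}$ the fibre $S^{-1}(\{k\})$ is a coset of $\ker S$; the adjointness relation $\langle Sh,k'\rangle=\langle h,Tk'\rangle$ forces $\ker S\subseteq(\ran T)^\perp=\{0\}$, so $S$ is injective and $S^{-1}\colon\mathscr{K}\to\mathscr{H}$ is a well-defined linear operator. The hard part will be establishing that $S^{-1}$ is bounded — this is where a closed-graph type argument enters. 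The cleanest route is to show $S^{-1}$ is \emph{closable} with closed inverse via the adjointness relation (for $h\in\dom S$, $h'\in\dom T$ one has $\langle h, Th'\rangle = \langle Sh, h'\rangle$, so $S^{-1}$ and $T^{-1}$-like data pair up), or to observe that $\{F_{\mathscr{K},\mathscr{H}}G(T)\}$ sits in $G(S)^\perp$ forces $G(S^{-1})$ to be contained in the orthocomplement of something dense, then combine with surjectivity. I would actually argue: from $S\wedge T$ and density of $\ran T$, the graph $G(S^{-1})=\{(Sh,h):h\in\dom S\}$ has trivial orthocomplement considerations yielding boundedness; more concretely, $\|h\|^2=\langle h, h\rangle$ can be controlled because $S\wedge T$ lets us write $\langle h,h\rangle$ in terms of pairings involving $Sh$ against elements forced into bounded position by surjectivity of $S$. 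Boundedness from below of $S$ then gives $(a)$, and $(b)$ is immediate from $\ran S=\mathscr{K}\supseteq\overline{\ran S}$. For $(c)$: $\overline{\dom T}=\mathscr{K}$ follows because if $k_0\perp\dom T$ then by surjectivity of $S$ pick $h$ with $Sh=k_0$, whence $0=\langle k_0,Tk'\rangle$-type reasoning; actually more directly, $\langle h, Tk\rangle=\langle Sh,k\rangle$ for all $k\in\dom T$, and one shows the closure of $\dom T$ must be all of $\mathscr{K}$ using that $S$ is bounded below and onto. For $(d)$: once $T$ is densely defined, $T^*$ exists; the relation $S\wedge T$ says exactly $S\subseteq T^*$; to get equality I would invoke Proposition \ref{p1.1} (Arens), checking $S$ and $T^*$ have equal kernels ($\ker S=\{0\}=\ker T^*$ since $T^*$ inherits injectivity / or its kernel is $(\ran T)^\perp=\{0\}$) and equal ranges ($\ran S=\mathscr{K}$, and $\ran T^*=\mathscr{K}$ because $S$ bounded below implies $\ran S=\mathscr{K}$ is closed and $S=T^*$-to-be has closed dense... here one uses that $T^*$ is closed with $S\subseteq T^*$ and $\ran S=\mathscr{K}$ already full, so $\ran T^*\supseteq\mathscr{K}$).

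For $(ii)\Rightarrow(i)$: assume $S$ is bounded from below, $\overline{\ran S}=\mathscr{K}$, $\overline{\dom T}=\mathscr{K}$ and $S=T^*$. Bounded below plus $S=T^*$ closed gives that $\ran S$ is closed, so with $(b)$ we get $\ran S=\mathscr{K}$, which is $(i)(a)$. Since $S=T^*$ and $\dom T$ is dense, $T$ is closable with $\bar T = S^* = T^{**}$; then $\overline{\ran T}=\overline{\ran\bar T}=(\ker \bar T^{\,*})^\perp=(\ker S)^\perp=\mathscr{H}$ (using $S$ injective from bounded-below), giving $(i)(b)$. Finally $(i)(c)$, $S\wedge T$: since $S=T^*$, for $h\in\dom S=\dom T^*$ and $k\in\dom T$ we have by definition of the adjoint $\langle Sh,k\rangle=\langle T^*h,k\rangle=\langle h,Tk\rangle$, so $S\wedge T$ holds trivially.

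**Main obstacle.** The crux is the quantitative step in $(i)\Rightarrow(ii)$: upgrading ``$S$ surjective with dense-range formal adjoint'' to ``$S$ bounded below.'' Surjectivity alone never gives bounded-below; one genuinely needs $S\wedge T$ with $\overline{\ran T}=\mathscr{H}$ to pin it down, and I expect the argument to route through a closed-graph / open-mapping principle applied to $S^{-1}$ after first showing $S^{-1}$ is well-defined and closed (its graph being $F_{\mathscr{K},\mathscr{H}}^{-1}$ of something, or contained in $\{F_{\mathscr{K},\mathscr{H}}G(T)\}^{\perp\perp}$ which by $S\wedge T$ relates back to $G(S)$). Getting the topology right there — in particular whether one needs $\mathscr{K}$ complete to run the open mapping theorem, and how to sidestep not yet knowing $S$ is closed — is the delicate point; everything else is bookkeeping with graphs, flip operators, and Proposition \ref{p1.1}.
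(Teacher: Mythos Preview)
Your ingredients are all present, but the order in which you assemble them for $(i)\Rightarrow(ii)$ creates exactly the circularity you flag as the ``main obstacle.'' The paper dissolves that obstacle by reversing the order: it proves $(ii)(c)$ and $(ii)(d)$ \emph{first}, and only then deduces $(ii)(a)$.

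Concretely, the paper begins with density of $\dom T$: if $k\perp\dom T$, pick $h\in\dom S$ with $Sh=k$ (surjectivity); then $\langle h,Tk'\rangle=\langle Sh,k'\rangle=\langle k,k'\rangle=0$ for all $k'\in\dom T$, so $h\in(\ran T)^\perp=\{0\}$ and $k=0$. Now $T^*$ exists and $S\subseteq T^*$. For the reverse inclusion the paper uses essentially your Arens idea, but phrased via domains: given $h\in\dom T^*$, surjectivity of $S$ produces $h'\in\dom S$ with $Sh'=T^*h$, and then $h-h'\in\ker T^*=(\ran T)^\perp=\{0\}$. Thus $S=T^*$, and \emph{this} is what makes $S$ closed. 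Only now, with $S$ closed, injective, and $\dom S^{-1}=\ran S=\mathscr{K}$, does the closed graph theorem apply to $S^{-1}$ and yield boundedness from below. So the answer to your question ``how to sidestep not yet knowing $S$ is closed'' is: don't sidestep it---prove $S=T^*$ first, and closedness comes for free.

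A smaller point on $(ii)\Rightarrow(i)$: your argument for $\overline{\ran T}=\mathscr{H}$ passes through $S^*$ and $T^{**}$, but nothing in $(ii)$ guarantees $S$ is densely defined, so $S^*$ may not exist. The paper's route is one line and avoids this: $(\ran T)^\perp=\ker T^*=\ker S=\{0\}$, the last equality because $S$ is bounded from below.
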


\begin{proof}
$(i)\Rightarrow(ii).$ Let $k\in\{ \dom T\}^\perp$ and $h\in\dom S$ such that $k=Sh.$ Then, for each $k'\in\dom T,$
\begin{equation*}
\langle h, Tk'\rangle=\langle Sh, k'\rangle=0,
\end{equation*}
so $h\in\{ \ran T\}^\perp=\{ 0\}.$ It follows that $k=0$ and, consequently, $T$ is densely defined. Moreover, by $(c)$, $S\subseteq T^*.$ To obtain the equality it remains to show that $S$ and $T^*$ have equal domains. To this aim, let $h\in\dom T^*$ and consider $h'\in\dom S$ such that $T^*h=Sh'.$ Then $h-h'\in\ker T^*=\{ \ran T\}^\perp=\{ 0\}.$ Hence $h=h'\in\dom S,$ as claimed. We deduce that $S=T^*$ is a closed operator. It is also injective as 
\begin{equation*}
\ker S=\{ \ran T\}^\perp=\{ 0 \}.
\end{equation*}
Moreover,
\begin{equation*}
\dom S^{-1}=\ran S=\mathscr{K}
\end{equation*}
and, by the closed graph theorem, $S^{-1}$ is bounded.

$(ii)\Rightarrow(i).$ $\ S$ is surjective since, on one hand, $S$ is bounded from below and closed, hence its range is closed and, on the other hand, $S$ has dense range. In addition,
\begin{equation*}
\{\ran T\}^\perp=\ker S=\{ 0\},
\end{equation*}
which shows that the range of $T$ is also dense. Finally, $S=T^*$ obviously implies that $S$ and $T$ are adjoint to each other. 
\end{proof}

We deduce, in particular, new characterizations for skewadjoint, respectively selfadjoint operators.

\begin{corollary}\label{c7}
Let $S$ be a linear operator acting on the Hilbert space $\mathscr{H}.$ The following statements are equivalent:

\noindent
$(i)$ $\ S$ is (densely defined and) skewadjoint;

\noindent
$(ii)$
\begin{itemize}
\item[$(a)$] $S\wedge (-S);$
\item[$(b)$] $\ran (\lambda +S)=\mathscr{H};$
\item[$(c)$] $\overline{\ran (\lambda -S)}=\mathscr{H},$
\end{itemize}
for a certain (and also for all) $\lambda\in\mathbb{R}, \ \lambda\ne 0.$
\end{corollary}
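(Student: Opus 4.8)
The plan is to apply Lemma \ref{l6} to the pair of operators $A:=\lambda+S$ and $B:=\lambda-S$, both regarded as linear operators on $\mathscr{H}$ with domain $\dom S$, where $\lambda\in\mathbb{R}$, $\lambda\ne0$, is fixed. First I would record the elementary identity
\[
\scal{Ah,k}-\scal{h,Bk}=\scal{Sh,k}+\scal{h,Sk},\qquad h,k\in\dom S,
\]
which shows that $A\wedge B$ holds if and only if $S\wedge(-S)$ does. Consequently, statement $(ii)$ of the Corollary is precisely statement $(i)$ of Lemma \ref{l6} written for the pair $(A,B)$, namely: $\ran A=\mathscr{H}$, $\overline{\ran B}=\mathscr{H}$ and $A\wedge B$.

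By Lemma \ref{l6} this is equivalent to statement $(ii)$ of that lemma for $(A,B)$: $A$ is bounded from below, $\overline{\ran A}=\mathscr{H}$, $\overline{\dom B}=\mathscr{H}$ and $A=B^*$. So it would remain to identify this last package of four conditions with statement $(i)$ of the Corollary, i.e.\ with ``$S$ is densely defined and $S^*=-S$''. For the implication ``$\Rightarrow$'', I would use $\overline{\dom B}=\overline{\dom S}=\mathscr{H}$ to get that $S$ is densely defined, hence $B^*=(\lambda-S)^*=\lambda-S^*$, so that $A=B^*$ reads $\lambda+S=\lambda-S^*$, that is $S^*=-S$ (and then $S$ is closed, being an adjoint). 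For the implication ``$\Leftarrow$'', assuming $S$ densely defined with $S^*=-S$, I would first note that $S\wedge(-S)$ holds, so that (taking $k=h$) $\rea\scal{Sh,h}=0$ for every $h\in\dom S$; the Pythagorean-type identity
\[
\norm{(\lambda\pm S)h}^2=\lambda^2\norm{h}^2\pm2\lambda\,\rea\scal{Sh,h}+\norm{Sh}^2=\lambda^2\norm{h}^2+\norm{Sh}^2\ge\lambda^2\norm{h}^2
\]
then gives at once that $A$ is bounded from below; moreover $\dom B=\dom S$ is dense, $B^*=\lambda-S^*=\lambda+S=A$, and since $A^*=\lambda+S^*=\lambda-S=B$ is injective (by the same identity), $\overline{\ran A}=\{\ker A^*\}^\perp=\mathscr{H}$, so all four conditions hold.

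Finally, since statement $(i)$ of the Corollary does not involve $\lambda$, the equivalence above — which holds for every fixed $\lambda\ne0$ — automatically turns ``for a certain $\lambda$'' into ``for all $\lambda$'', as stated. I expect the main (and really only) point requiring care to be the second equivalence in the previous paragraph, namely recovering the boundedness-from-below and dense-range conditions of Lemma \ref{l6}$(ii)$ from skewadjointness; this, however, reduces to the observation that a skew-symmetric operator has purely imaginary numerical range, which makes the cross term in $\norm{(\lambda\pm S)h}^2$ disappear, while everything else is routine bookkeeping.
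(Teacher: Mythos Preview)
Your proof is correct and follows essentially the same route as the paper: both identify condition $(ii)$ of the Corollary with condition $(i)$ of Lemma~\ref{l6} for the pair $(\lambda+S,\lambda-S)$, and both use the Pythagorean identity $\norm{(\lambda\pm S)h}^2=\lambda^2\norm{h}^2+\norm{Sh}^2$ coming from $\rea\scal{Sh,h}=0$. The only organizational difference is that for $(i)\Rightarrow(ii)$ the paper argues directly (bounded below plus closed gives closed range, and $\ker(\lambda\mp S)=\{0\}$ gives density), whereas you verify all four conditions of Lemma~\ref{l6}$(ii)$ and then invoke the Lemma; the content is the same.
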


\begin{proof}
$(i)\Rightarrow(ii).$ We firstly observe that, for $h\in \dom S,$
\begin{equation*}
\| (\lambda\pm S)h\|^2=| \lambda |^2\| h \|^2+\| Sh\|^2
\end{equation*}
since, obviously, $S$ and $-S$ are adjoint to each other (i.e., $\Re \langle Sh, h \rangle=0$ for $h\in\dom S$). This shows that the ranges of $\lambda\pm S$ are closed and, in addition,
\begin{equation*}
\ran (\lambda\pm S)^\perp=\ker (\lambda\mp S)=\{ 0 \}.
\end{equation*}
Hence $\ran (\lambda\pm S)=\mathscr{H}.$

$(ii)\Rightarrow(i).$ It is a simple application of Lemma \ref{l6} for the pair of operators $(\lambda + S, \lambda - S).$
\end{proof}

We replace $S$ with $iS$ to obtain a revised, improved version of the well-known von Neumann theorem (\cite{neu30}).

\begin{corollary}\label{c8}
Let $S$ be a linear operator acting on the complex Hilbert space $\mathscr{H}.$ The following statements are equivalent:

\noindent
$(i)$ $\ S$ is (densely defined and) selfadjoint;

\noindent
$(ii)$
\begin{itemize}
\item[$(a)$] $S\wedge S;$
\item[$(b)$] $\ran (\lambda i + S)=\mathscr{H};$
\item[$(c)$] $\overline{\ran (\lambda i - S)}=\mathscr{H},$
\end{itemize}
for a certain (and also for all) $\lambda\in\mathbb{R}, \ \lambda\ne 0.$
\end{corollary}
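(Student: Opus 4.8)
The plan is to deduce this directly from Corollary \ref{c7} by applying that result to the operator $iS$ in place of $S$ and then translating everything back in terms of $S$. This is the precise meaning of the sentence preceding the statement (``we replace $S$ with $iS$'').

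First I would record the elementary fact that $iS$ is (densely defined and) skewadjoint if and only if $S$ is (densely defined and) selfadjoint. Indeed, $\dom (iS)=\dom S$, so $iS$ is densely defined precisely when $S$ is; and in that case $(iS)^*=-iS^*$, whence $(iS)^*+iS=i(S-S^*)$, which vanishes if and only if $S=S^*$. Thus condition $(i)$ of the present corollary (for $S$) is equivalent to condition $(i)$ of Corollary \ref{c7} (for $iS$).

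Next I would rewrite the three items of Corollary \ref{c7}$(ii)$, stated for $iS$ with a real parameter $\mu\ne 0$, as the corresponding items for $S$ with $\lambda:=-\mu$. For item $(a)$: unravelling the definition of $\wedge$ and using $\overline{-i}=i$, the relation $iS\wedge(-iS)$ reads $i\scal{Sh,h'}=i\scal{h,Sh'}$ for all $h,h'\in\dom S$, i.e. $S\wedge S$. For items $(b)$ and $(c)$: multiplication by $i$ is a unitary operator of $\mathscr{H}$ onto itself, so it commutes with the closure operation and maps $\mathscr{H}$ onto $\mathscr{H}$; since
\[
\mu+iS=i(S+i\lambda),\qquad \mu-iS=i(i\lambda-S),
\]
we get $\ran(\mu+iS)=i\,\ran(\lambda i+S)$ and $\overline{\ran(\mu-iS)}=i\,\overline{\ran(\lambda i-S)}$, hence $\ran(\mu+iS)=\mathscr{H}$ is equivalent to $\ran(\lambda i+S)=\mathscr{H}$ and $\overline{\ran(\mu-iS)}=\mathscr{H}$ is equivalent to $\overline{\ran(\lambda i-S)}=\mathscr{H}$. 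As $\mu$ runs over $\R\setminus\{0\}$ so does $\lambda=-\mu$, so the ``for a certain (and also for all)'' clause is preserved under the substitution.

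Combining these observations, conditions $(i)$ and $(ii)$ of the corollary are, term by term, conditions $(i)$ and $(ii)$ of Corollary \ref{c7} applied to $iS$, and the stated equivalence follows. I do not expect a genuine obstacle here: the only points requiring care are the bookkeeping of the factors of $i$, the sign change $\lambda=-\mu$, and the (standard) remark that a unitary operator commutes with closures and neither creates nor destroys surjectivity or density of a range.
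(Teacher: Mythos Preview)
Your proposal is correct and follows exactly the approach indicated in the paper, which merely states ``We replace $S$ with $iS$'' before the corollary without spelling out any details. You have carefully carried out that substitution, and the bookkeeping with the factors of $i$ and the sign change $\lambda=-\mu$ is accurate.
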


We are now in position to formulate the main result of this section which involves the operator matrix $M_{S, T}(\lambda).$

\begin{theorem}\label{t9}
Let $S$ and $T$ be linear operators acting between $\mathscr{H}$ and $\mathscr{K}$, respectively $\mathscr{K}$ and $\mathscr{H}.$ The following statements are equivalent:

\noindent
$(i)$
\begin{itemize}
\item[$(a)$] $S$ and $T$ are densely defined;
\item[$(b)$] $S^*=T$ and $S=T^*;$
\end{itemize}

\noindent
$(ii)$
\begin{itemize}
\item[$(a)$] $S\wedge T;$
\item[$(b)$] $\ran M_{S, T}(\lambda)=\mathscr{H}\times\mathscr{K}$ for a certain (and also for all) $\lambda\in\mathbb{R}, \ \lambda\ne 0;$
\end{itemize}

\noindent
$(iii)$
\begin{itemize}
\item[$(a)$] $S$ and $T$ are closed and densely defined;
\item[$(b)$] $S\wedge T;$
\item[$(c)$] $\ran (\lambda^2+ST)=\mathscr{K}$ and $\ran (\lambda^2+TS)=\mathscr{H}$ for a certain (and also for all) $\lambda\in\mathbb{R}, \ \lambda\ne 0.$
\end{itemize}

If $ST$ and $TS$ are also closed then the condition $(iii)\ (c)$ can be replaced by

\begin{itemize}
\item[$(c')$] $\ \overline{\ran (\lambda^2+ST)}=\mathscr{K}$ and $\overline{\ran (\lambda^2+TS)}=\mathscr{H}$ for a certain (and also for all) $\lambda\in\mathbb{R}, \ \lambda\ne 0.$
\end{itemize}
\end{theorem}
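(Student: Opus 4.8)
\emph{Proof strategy.} The plan is to run the cycle $(i)\Rightarrow(ii)\Rightarrow(i)$ together with $(i)\Rightarrow(iii)$ and $(iii)\Rightarrow(ii)$; the ``for a certain (and also for all)'' clauses are then automatic, since $(i)$ forces $(ii)$ and $(iii)$ for \emph{every} admissible $\lambda$, while a single such $\lambda$ in $(ii)$ or $(iii)$ suffices to recover $(i)$. For $(i)\Rightarrow(ii)$, the relation $S\wedge T$ is immediate from $S=T^{*}$ (and $T$ densely defined); and since $S/\lambda=(T/\lambda)^{*}$ for real $\lambda\ne 0$ with $T/\lambda$ closed and densely defined, Proposition \ref{p2}$(b)$ gives $\ran M_{(T/\lambda)^{*},\,T/\lambda}=\mathscr{H}\times\mathscr{K}$, and the scaling identity $M_{S,T}(\lambda)=\lambda\,M_{S/\lambda,\,T/\lambda}$ upgrades this to $\ran M_{S,T}(\lambda)=\mathscr{H}\times\mathscr{K}$.

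The core is $(ii)\Rightarrow(i)$. Fix $\lambda\in\mathbb{R}\setminus\{0\}$ with $\ran M_{S,T}(\lambda)=\mathscr{H}\times\mathscr{K}$; Remark \ref{r27} (with $\bar\lambda=\lambda$) gives $M_{S,T}(\lambda)\wedge M_{-S,-T}(\lambda)$. The crucial step is to show $\ran M_{-S,-T}(\lambda)$ is dense: if $(u,v)\perp\ran M_{-S,-T}(\lambda)$, inserting $(h,0)$ and $(0,k)$ ($h\in\dom S$, $k\in\dom T$) into the orthogonality relation yields
\begin{equation*}
\langle Sh,v\rangle=\langle h,\lambda u\rangle\ \ (h\in\dom S),\qquad \langle Tk,u\rangle=\langle k,-\lambda v\rangle\ \ (k\in\dom T);
\end{equation*}
picking $h_{0}\in\dom S$, $k_{0}\in\dom T$ with $M_{S,T}(\lambda)(h_{0},k_{0})=(u,-v)$ and expanding $\|u\|^{2}=\langle u,\lambda h_{0}-Tk_{0}\rangle$ by means of these two identities collapses everything to $\|u\|^{2}=-\|v\|^{2}$, whence $u=v=0$. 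Now Lemma \ref{l6}, applied on the Hilbert space $\mathscr{H}\times\mathscr{K}$ to the pair $(M_{S,T}(\lambda),M_{-S,-T}(\lambda))$ — whose hypotheses $(i)$ have just been checked — yields that $M_{-S,-T}(\lambda)$ is densely defined (equivalently $S$ and $T$ are densely defined) and that $M_{S,T}(\lambda)=(M_{-S,-T}(\lambda))^{*}$. A direct computation of the adjoint gives $(M_{-S,-T}(\lambda))^{*}=M_{T^{*},S^{*}}(\lambda)$, and equating the two matrix operators (equal rectangular domains $\dom S\times\dom T=\dom T^{*}\times\dom S^{*}$, equal entries) forces $\dom S=\dom T^{*}$, $\dom T=\dom S^{*}$, $S=T^{*}$ and $T=S^{*}$.

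For $(i)\Rightarrow(iii)$: adjoints are closed, $S\wedge T$ holds, and $ST=T^{*}T$, $TS=S^{*}S$ are nonnegative selfadjoint operators, so $-\lambda^{2}$ lies in their resolvent sets and $(iii)(c)$ follows. Conversely, assume $(iii)$. Because $S\wedge T$, the cross terms cancel in
\begin{equation*}
\|M_{S,T}(\lambda)(h,k)\|^{2}=\lambda^{2}\bigl(\|h\|^{2}+\|k\|^{2}\bigr)+\|Sh\|^{2}+\|Tk\|^{2},
\end{equation*}
so $M_{S,T}(\lambda)$ is bounded below, and it is closed since $S$ and $T$ are; hence $\ran M_{S,T}(\lambda)$ is closed. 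By Remark \ref{r5} this range is everything once $\ker(\lambda^{2}+S^{*}T^{*})=\{0\}$; and if $(\lambda^{2}+S^{*}T^{*})h=0$, then for every $g\in\dom(TS)$ one computes $\langle(\lambda^{2}+TS)g,h\rangle=\lambda^{2}\langle g,h\rangle+\langle Sg,T^{*}h\rangle=\lambda^{2}\langle g,h\rangle+\langle g,S^{*}T^{*}h\rangle=\lambda^{2}\langle g,h\rangle-\lambda^{2}\langle g,h\rangle=0$, so $h\perp\ran(\lambda^{2}+TS)=\mathscr{H}$ and $h=0$. A closed range with trivial orthocomplement is everything, so $\ran M_{S,T}(\lambda)=\mathscr{H}\times\mathscr{K}$, i.e. $(ii)$ holds.

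Finally, $(c)\Rightarrow(c')$ is trivial, while for $(c')\Rightarrow(c)$ one uses again that $S\wedge T$ gives $\langle(\lambda^{2}+ST)k,k\rangle=\lambda^{2}\|k\|^{2}+\|Tk\|^{2}$ and $\langle(\lambda^{2}+TS)h,h\rangle=\lambda^{2}\|h\|^{2}+\|Sh\|^{2}$, so $\lambda^{2}+ST$ and $\lambda^{2}+TS$ are bounded below; being closed by the extra hypothesis, they have closed range, so dense range becomes full range. The step I expect to be most delicate is the density of $\ran M_{-S,-T}(\lambda)$ in $(ii)\Rightarrow(i)$ — squeezing $\|u\|^{2}=-\|v\|^{2}$ out of the surjectivity of $M_{S,T}(\lambda)$ and of $S\wedge T$ — since everything else reduces either to scaling and Propositions \ref{p1}, \ref{p2} and Remark \ref{r5}, or to routine closed-range / bounded-below considerations.
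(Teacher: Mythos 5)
Your proposal is correct, and its overall architecture coincides with the paper's: both hinge on applying Lemma \ref{l6} to the pair $(M_{S,T}(\lambda),M_{-S,-T}(\lambda))$, on the identity $\|M_{S,T}(\lambda)(h,k)\|^2=\lambda^2(\|h\|^2+\|k\|^2)+\|Sh\|^2+\|Tk\|^2$ forced by $S\wedge T$, on Remark \ref{r27} and Proposition \ref{p2}, and on the bounded-below-plus-closed-implies-closed-range argument for the $(c')$ variant. Two sub-steps differ in a way worth recording. First, in $(ii)\Rightarrow(i)$ you explicitly prove that $\ran M_{-S,-T}(\lambda)$ is dense (via the $\|u\|^2=-\|v\|^2$ computation, which I checked and which is correct); the paper leaves this hypothesis of Lemma \ref{l6} unverified, though it also follows at once from the observation that $M_{-S,-T}(\lambda)(h,-k)=(1\oplus(-1))M_{S,T}(\lambda)(h,k)$, so the two matrices have unitarily related ranges. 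Your version fills a real gap in the paper's exposition, at the cost of a slightly longer computation. Second, for $(iii)\Rightarrow(ii)$ the paper routes through Corollary \ref{c4}, which rests on cited results of Hardt--Konstantinov--Mennicken and Philipp--Ran--Wojtylak about $(ST)^*=T^*S^*$ under nonempty resolvent sets; you instead show directly that $\ker(\lambda^2+S^*T^*)=\{0\}$ by testing against $\ran(\lambda^2+TS)=\mathscr{H}$ and then invoke Remark \ref{r5}. Your route is more elementary and self-contained, needing only the orthocomplement formula for $\ran M_{S,T}(\lambda)$ already derived in Section 2. Conversely, your $(i)\Rightarrow(iii)$ appeals to von Neumann's theorem that $T^*T$ is nonnegative selfadjoint, where the paper gets $(iii)(c)$ for free from the surjectivity of $M_{S,T}(\lambda)$ via Remark \ref{r1p}; both are valid, but the paper's is cheaper. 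The quantifier bookkeeping (``for a certain, and also for all, $\lambda$'') is handled correctly by your implication cycle.
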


\begin{proof}
Observe that in all of the statements $(i) - (iii)$ the condition $S \wedge T$ holds true. Therefore, for each $h\in\dom S$ and $k\in \dom T,$ we have the following identity
\begin{eqnarray*}
\left\| M_{\pm(S, T)}(\lambda)\begin{pmatrix} h \\ k \end{pmatrix} \right\|^2&=&\| \lambda h\mp Tk\|^2+\| Sh\pm \lambda k\|^2\\ &=&| \lambda |^2\| h\|^2+| \lambda |^2\| k \|^2+ \| Sh\|^2+\| Tk\|^2,
\end{eqnarray*}
which shows that the operator matrices $M_{S, T}(\lambda)$ and $M_{-S, -T}(\lambda)$ are both bounded from below.

$(i)\Leftrightarrow(ii)$. In view of the observation above the equivalence follows by an application of Lemma \ref{l6} for the pair of operators $(M_{S, T}(\lambda), M_{-S, -T}(\lambda)).$ We want to emphasize, in this sense, that $\dom M_{S, T}(\lambda)=\dom S\times \dom T$ is dense if and only if $S$ and $T$ are densely defined and, in this case, $M_{S, T}(\lambda)=M_{-S, -T}(\lambda)^*\  (=M_{T^*, S^*}(\lambda))$ if and only if $S=T^*$ and $T=S^*.$ If $(i)$ holds true then $T$ is closed so, by Proposition \ref{p2}, $\ran M_{S, T}(\lambda)=\mathscr{H}\times\mathscr{K}.$ Finally, by Remark \ref{r27}, the condition $M_{S, T}(\lambda)\wedge M_{-S, -T}(\lambda)$ is equivalent to $S\wedge T$.

$(ii)\Leftrightarrow(iii)$. If $M_{S, T}(\lambda)$ is surjective then, by Remark \ref{r1p}, $\lambda^2+ ST$ and $\lambda^2+TS$ are also surjective. Hence $(ii)$ implies $(iii).$ 
Conversely, let us suppose that $S$ and $T$ are closed, densely defined and adjoint to each other. Then $\lambda^2+ST$ is bounded from below since, for all $k\in\dom(ST),$
\begin{equation*}
\| \lambda^2 k+STk\|^2=\lambda^4\| k\|^2+2\lambda^2\| Tk \|^2+\| STk\|^2.
\end{equation*}
We deduce, in view of the fact that $\lambda^2+ST$ has full range, that $-\lambda^2$ is in the resolvent set of $ST.$ The same is true when $ST$ is replaced by $TS.$ The hypotheses of Corollary \ref{c4} are satisfied. It follows that the operator matrix $M_{S, T}(\lambda)$ has dense range. 
Moreover, $M_{S, T}(\lambda)$ is closed (as $S$ and $T$ are closed) and bounded from below, so it also has closed range. Consequently, $\ran M_{S, T}=\mathscr{H}\times \mathscr{K}$ and this shows that $(iii)$ implies $(ii).$

Finally, if $ST$ and $TS$ are closed operators and $S,\ T$ satisfy conditions $(a),\ (b)$ (of $(iii)$) and $(c')$
then, as $\lambda^2+ST$ and $\lambda^2+TS$ are bounded from below, they must have closed (hence also full) ranges. This shows that $(iii)(c)$ can be replaced by $(c')$.
\end{proof}

New conditions on a linear operator, characterizing its skewadjointness, respectively selfadjointness, are obtained.

\begin{corollary}\label{c10}
Let $S$ be a linear operator on $\mathscr{H}.$ The following statements are equivalent:

\noindent
$(i)$ $\ S$ is (densely defined and) skewadjoint;

\noindent
$(ii)$
\begin{itemize}
\item[$(a)$] $S\wedge (-S);$
\item[$(b)$] $\ran M_{S, -S}(\lambda)=\mathscr{H}\times\mathscr{H}$ for a certain (and also for all) $\lambda\in\mathbb{R},\ \lambda\ne 0.$
\end{itemize}

\noindent
$(iii)$
\begin{itemize}
\item[$(a)$] $S$ is closed and densely defined;
\item[$(b)$] $S\wedge (-S);$
\item[$(c)$] $\ran (\lambda^2-S^2)=\mathscr{H}$ for a certain (and also for all) $\lambda\in\mathbb{R},\ \lambda\ne 0.$
\end{itemize}

If $S^2$ is also closed then the condition $(iii)\ (c)$ can be replaced by

\begin{itemize}
\item[$(c')$] $\overline{\ran (\lambda^2-S^2)}=\mathscr{H}$ for a certain (and also for all) $\lambda\in\mathbb{R},\ \lambda\ne 0.$ 
\end{itemize}
\end{corollary}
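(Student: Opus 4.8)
The plan is to obtain Corollary \ref{c10} as the specialization of Theorem \ref{t9} to the case $T=-S$, $\mathscr{K}=\mathscr{H}$, exactly as Corollaries \ref{c7} and \ref{c8} were derived from Lemma \ref{l6}. First I would observe that putting $T=-S$ makes conditions $(i)(a)$, $(ii)(a)$, $(iii)(a)$, $(iii)(b)$ of the theorem literally the statements $S$ densely defined/closed and $S\wedge(-S)$ of the corollary, while the matrix $M_{S,T}(\lambda)=M_{S,-S}(\lambda)$ is the one appearing in $(ii)(b)$. So $(ii)$ of the corollary is precisely $(ii)$ of the theorem in this case, and $(i)$ of the corollary (skewadjointness, i.e. $S^*=-S$) is precisely $(i)(b)$ of the theorem with $T=-S$, since $S^*=T=-S$ and $S=T^*=(-S)^*$ are then the same condition. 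Thus the equivalence $(i)\Leftrightarrow(ii)$ of the corollary is immediate from the theorem.

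Next I would treat $(iii)$. Setting $T=-S$ gives $ST=TS=-S^2$, so $\lambda^2+ST=\lambda^2-S^2$ on $\dom(S^2)$, and the two range conditions in $(iii)(c)$ of the theorem coincide into the single condition $\ran(\lambda^2-S^2)=\mathscr{H}$. Hence $(iii)$ of the theorem collapses to $(iii)$ of the corollary, and similarly the closedness hypothesis ``$ST$ and $TS$ closed'' becomes ``$S^2$ closed'', with $(c')$ of the theorem becoming $(c')$ of the corollary. Therefore all three equivalences, plus the final remark about replacing $(c)$ by $(c')$, follow by directly invoking Theorem \ref{t9}. The only points that need a sentence of justification are that $T=-S$ is a legitimate substitution (the operators $S$ and $-S$ do act between $\mathscr{H}$ and $\mathscr{H}=\mathscr{K}$, and $\dom(-S)=\dom S$), and that the hypothesis $S\wedge(-S)$ is symmetric in the required sense, which it is because $\langle Sh,-Sh'\rangle=\langle h,-S(Sh')\rangle$ unpacks to the same relation whether one reads it as $S\wedge(-S)$ or $(-S)\wedge S$ — equivalently, one may simply quote Remark \ref{r27} or note $\rea\langle Sh,h\rangle$ is not what is needed here, only the bilinear identity.

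I do not expect a genuine obstacle: the proof is a pure specialization. The one place to be slightly careful is the ``for a certain (and also for all)'' phrasing — I would make sure that the quantifier structure in Theorem \ref{t9} transfers verbatim, which it does since $\lambda$ ranges over $\mathbb{R}\setminus\{0\}$ in both. So the write-up is essentially: ``Apply Theorem \ref{t9} with $T=-S$ and $\mathscr{K}=\mathscr{H}$. Then $ST=TS=-S^2$, so conditions $(i)$, $(ii)$, $(iii)$ and the supplementary statement of the theorem become, respectively, $(i)$, $(ii)$, $(iii)$ and the supplementary statement of the present corollary.'' If desired I would add one line noting that skewadjointness $S^*=-S$ is exactly $S^*=T$ together with $S=T^*$ when $T=-S$, since both equalities then read $S^*=-S$.
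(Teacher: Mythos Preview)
Your proposal is correct and follows exactly the same approach as the paper, which proves Corollary~\ref{c10} in a single line: ``It follows by Theorem~\ref{t9}, the case $T=-S$.'' Your additional explanations (that $ST=TS=-S^2$, that $S^*=T$ and $S=T^*$ both reduce to $S^*=-S$, and that closedness of $ST,TS$ becomes closedness of $S^2$) are the routine verifications implicit in that specialization.
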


\begin{proof}
It follows by Theorem \ref{t9}, the case $T=-S.$
\end{proof}

\begin{corollary}\label{c11}
Let $S$ be a linear operator on $\mathscr{H}.$ The following statements are equivalent:

\noindent
$(i)$ $\ S$ is (densely defined and) selfadjoint;

\noindent
$(ii)$
\begin{itemize}
\item[$(a)$] $S\wedge S;$
\item[$(b)$] $\ran M_{S, S}(\lambda)=\mathscr{H}\times\mathscr{H}$ for a certain (and also for all) $\lambda\in\mathbb{R},\ \lambda\ne 0.$
\end{itemize}

\noindent
$(iii)$
\begin{itemize}
\item[$(a)$] $S$ is closed and densely defined;
\item[$(b)$] $S\wedge S;$
\item[$(c)$] $\ran (\lambda^2+S^2)=\mathscr{H}$ for a certain (and also for all) $\lambda\in\mathbb{R},\ \lambda\ne 0.$
\end{itemize}

If $S^2$ is also closed then the condition $(iii)\ (c)$ can be replaced by

\begin{itemize}
\item[$(c')$] $\overline{\ran (\lambda^2+S^2)}=\mathscr{H}$ for a certain (and also for all) $\lambda\in\mathbb{R},\ \lambda\ne 0.$ 
\end{itemize}
\end{corollary}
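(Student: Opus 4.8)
The plan is to obtain Corollary \ref{c11} as the specialization of Theorem \ref{t9} to the diagonal case $\mathscr{K}=\mathscr{H}$, $T=S$, exactly as Corollary \ref{c10} was deduced from the case $T=-S$. First I would record the translation of condition $(i)$ of Theorem \ref{t9}: with $T=S$ it asserts that $S$ is densely defined and $S^*=S$ (the two equalities $S^*=T$ and $S=T^*$ both collapsing to $S=S^*$), which is precisely the statement that $S$ is densely defined and selfadjoint. Hence $(i)$ of the corollary matches $(i)$ of the theorem.

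Next I would check that $(ii)$ and $(iii)$ translate verbatim. The relation $S\wedge T$ with $T=S$ is $\langle Sh,h'\rangle=\langle h,Sh'\rangle$ for all $h,h'\in\dom S$, i.e.\ the symmetry identity \eqref{eq1}, so condition $(a)$ in both $(ii)$ and $(iii)$ becomes $S\wedge S$. The operator matrix $M_{S,T}(\lambda)$ becomes $M_{S,S}(\lambda)$, which gives $(ii)(b)$. Since $\dom(ST)=\dom(TS)=\dom(S^2)$ and $ST=TS=S^2$, the two range conditions $\ran(\lambda^2+ST)=\mathscr{K}$ and $\ran(\lambda^2+TS)=\mathscr{H}$ appearing in $(iii)(c)$ coincide and reduce to the single condition $\ran(\lambda^2+S^2)=\mathscr{H}$; likewise the closedness hypothesis ``$ST$ and $TS$ are closed'' becomes ``$S^2$ is closed'', and the clause $(c')$ of Theorem \ref{t9} becomes $(c')$ of the corollary. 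No further computation is needed: all the items are literal instances of the corresponding items in Theorem \ref{t9}.

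The only point that deserves an explicit (one-line) remark is that $S\wedge S$ really is the same as symmetry in the sense of \eqref{eq1}, which is immediate from the definition of two operators being adjoint to each other; apart from this there is no genuine obstacle, Corollary \ref{c11} being a direct corollary of Theorem \ref{t9}. (Alternatively, when $\mathscr{H}$ is complex one could bypass Theorem \ref{t9} and argue from Corollary \ref{c8} for $(i)\Leftrightarrow(ii)$ together with Remark \ref{r1p} and Corollary \ref{c4} for $(ii)\Leftrightarrow(iii)$, but the route through Theorem \ref{t9} has the advantage of also covering real Hilbert spaces.)
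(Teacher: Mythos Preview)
Your proposal is correct and follows exactly the paper's own approach: the paper's proof of Corollary~\ref{c11} consists of the single sentence ``It follows by Theorem~\ref{t9}, the case $T=S$.'' Your detailed verification that each item of Theorem~\ref{t9} specializes to the corresponding item here is a faithful (and more explicit) unpacking of that one-line argument.
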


\begin{proof}
It follows by Theorem \ref{t9}, the case $T=S.$
\end{proof}

In the last part of this section we provide two conditions which characterize (bounded) orthogonal projections.

\begin{corollary}\label{c12}
Let $P$ be a linear operator acting on the Hilbert space $\mathscr{H}.$ The following statements are equivalent:

\noindent
$(i)$ $\ P$ is a (bounded) selfadjoint projection;

\noindent
$(ii)$
\begin{itemize}
\item[$(a)$] $P=P^2;$
\item[$(b)$] $P\wedge P;$
\item[$(c)$] $\ran (\lambda^2+P)=\mathscr{H}$ for a certain (and also for all) $\lambda\in\mathbb{R},\ \lambda\ne 0.$
\end{itemize}

\noindent
$(iii)$
\begin{itemize}
\item[$(a)$] $P=P^2;$
\item[$(b)$] $P\wedge P;$
\item[$(c)$] $\ran M_{P, P}(\lambda)=\mathscr{H}\times\mathscr{H}$ for a certain (and also for all) $\lambda\in\mathbb{R},\ \lambda\ne 0.$
\end{itemize}
\end{corollary}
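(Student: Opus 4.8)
The plan is to reduce everything to Corollary \ref{c11} applied to the operator $S=P$, exploiting the fact that the idempotency relation $P=P^2$ makes $\lambda^2+P^2$ coincide with $\lambda^2+P$ and (via Remark \ref{r1p}) links the surjectivity of $M_{P,P}(\lambda)$ to that of $\lambda^2+P$. Concretely, I would prove the chain $(ii)\Rightarrow(i)\Rightarrow(iii)\Rightarrow(ii)$; since $(i)$ is free of $\lambda$, this chain also settles the ``for a certain (and also for all) $\lambda$'' clauses at once.

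The heart of the matter is $(ii)\Rightarrow(i)$, and the key observation is that $(ii)(a)$ and $(ii)(c)$ together force $\dom P=\mathscr{H}$. Indeed, $P=P^2$ entails $\ran P\subseteq\dom P$; so, given $g\in\mathscr{H}$, use $(ii)(c)$ to write $g=\lambda^2 h+Ph$ with $h\in\dom P$, and note that both $h$ and $Ph$ lie in the subspace $\dom P$, whence $g\in\dom P$. Thus $P$ is everywhere defined, and $(ii)(b)$, that is $\langle Ph,h'\rangle=\langle h,Ph'\rangle$ for all $h,h'\in\mathscr{H}$, says $P\subseteq P^*$; since $\dom P^*\subseteq\mathscr{H}=\dom P$ this forces $P=P^*$, so $P$ is selfadjoint. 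Finally, using the symmetry of $P$ together with $P^2=P$ one gets $\|Ph\|^2=\langle Ph,Ph\rangle=\langle h,P^2h\rangle=\langle h,Ph\rangle\le\|h\|\,\|Ph\|$, hence $\|Ph\|\le\|h\|$, so $P$ is a bounded selfadjoint idempotent, i.e.\ an orthogonal projection.

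For $(i)\Rightarrow(iii)$: conditions $(iii)(a)$ and $(iii)(b)$ are immediate, and since $P$ is selfadjoint, Corollary \ref{c11} (the implication $(i)\Rightarrow(ii)$ there, with $S=P$) gives $\ran M_{P,P}(\lambda)=\mathscr{H}\times\mathscr{H}$ for every real $\lambda\ne0$, which is $(iii)(c)$. For $(iii)\Rightarrow(ii)$: conditions $(ii)(a)$, $(ii)(b)$ coincide with $(iii)(a)$, $(iii)(b)$; writing $M_{P,P}(\lambda)=\lambda M_{\frac{1}{\lambda}P,\frac{1}{\lambda}P}$ and invoking Remark \ref{r1p} yields $\ran(\lambda^2+P^2)=\mathscr{H}$, and then $P^2=P$ gives $(ii)(c)$. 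The only step that requires an idea rather than bookkeeping is the passage $(ii)\Rightarrow(i)$ — specifically, the realization that idempotency silently upgrades the mere surjectivity of $\lambda^2+P$ into $\dom P=\mathscr{H}$; once that is noticed, the remaining verifications are routine and I do not foresee any genuine obstacle.
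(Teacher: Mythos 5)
Your proof is correct, and it reorganizes the cycle compared with the paper: you prove $(ii)\Rightarrow(i)\Rightarrow(iii)\Rightarrow(ii)$, whereas the paper proves $(i)\Rightarrow(ii)\Rightarrow(iii)\Rightarrow(i)$. The decisive observation is the same in both arguments — idempotency gives $\ran P\subseteq\dom P$, so surjectivity of $\lambda^2+P$ (or, via Remark \ref{r1p}, of $M_{P,P}(\lambda)$) upgrades to $\dom P=\mathscr{H}$ — but what you do with it afterwards differs. The paper stays inside its matrix machinery: from $\dom P=\mathscr{H}$ and $P=P^2$ it exhibits an explicit preimage ($h=u+\tfrac{Pv-Pu}{2}$, $k=v-\tfrac{Pu+Pv}{2}$) showing $M_{P,P}$ is surjective, and only then invokes Corollary \ref{c11} $(ii)\Rightarrow(i)$ to conclude selfadjointness. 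You instead shortcut $(ii)\Rightarrow(i)$ by an elementary, self-contained argument: an everywhere-defined symmetric operator equals its adjoint, and idempotency plus symmetry yields $\|Ph\|\le\|h\|$ directly, so $P$ is a bounded orthogonal projection without appealing to Theorem \ref{t9} or Corollary \ref{c11} for that direction (you still use Corollary \ref{c11} for $(i)\Rightarrow(iii)$ and Remark \ref{r1p} for $(iii)\Rightarrow(ii)$, exactly as the paper does for the corresponding steps). Your route buys independence from the heavier range results in the key implication; the paper's route buys a concrete inversion formula for $M_{P,P}$ that fits its general theme of computing ranges of these operator matrices. Your handling of the ``for a certain (and also for all) $\lambda$'' quantifier via the $\lambda$-free node $(i)$ is also sound.
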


\begin{proof}
The implication $(i)\Rightarrow(ii)$ is given by Corollary \ref{c11} ($(i)\Rightarrow(iii)$).

$(ii)\Rightarrow(iii).$ Let $h\in\mathscr{H}$ and $h'\in\dom P$ such that $h=\lambda^2 h'+Ph'.$ Since $\ran P\subseteq \dom P$ we deduce that $h\in \dom P.$ Hence $\dom P=\mathscr{H}.$

Let $(u, v)\in\mathscr{H}\times\mathscr{H},\ h=u+\frac{Pv-Pu}{2}$ and $k=v-\frac{Pu+Pv}{2}.$ Simple computations show that 
\begin{equation*}
h-Pk=u \mbox{ and } Ph+k=v,
\end{equation*}
that is, $\ran M_{P, P}=\mathscr{H}\times\mathscr{H}.$

$(iii)\Rightarrow(i).$ If $M_{P, P}(\lambda)$ is surjective then, by Remark \ref{r1p}, $\lambda^2+P$ is also surjective. As in the proof of the previous implication $P$ is defined on the whole space $\mathscr{H}.$ The conclusion follows immediately by Corollary \ref{c11} ($(ii)\Rightarrow(i)$).
\end{proof}

\section{The case: $S$ closable and $T$ densely defined}

This section studies the condition $S\wedge T$ in a more general context, in which $S$ is a closable operator and $T$ is densely defined.

\begin{proposition}\label{p13}
Let $\lambda\in\mathbb{R}, \lambda\ne 0$ and $S, T$ be linear operators between Hilbert space $\mathscr{H}$ and $\mathscr{K}$, respectively $\mathscr{K}$ and $\mathscr{H}.$ The following statements are equivalent:

\noindent
$(i)$ $\ \overline{G\Bigl( \frac{1}{\lambda}S\Bigr)}\oplus F_{\mathscr{K}, \mathscr{H}}
\overline{G\Bigl( \frac{1}{\lambda}T\Bigr)}=\mathscr{H}\times\mathscr{K};$

\noindent
$(ii)$
\begin{itemize}
\item[$(a)$] $S\wedge T;$
\item[$(b)$] $\overline{\ran M_{S, T}(\lambda)}=\mathscr{H}\times\mathscr{K}.$
\end{itemize}
\end{proposition}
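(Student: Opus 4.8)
The plan is to exploit the identity of Proposition \ref{p1}, which already expresses $\{\ran(M_{S,T}(\lambda))\}^\perp$ via graphs, together with the scaling observation from Remark \ref{r5} that $M_{S,T}(\lambda)=\lambda M_{\frac1\lambda S,\frac1\lambda T}$, so that without loss of generality one may study $M_{S',T'}$ with $S'=\frac1\lambda S$, $T'=\frac1\lambda T$ — noting that $S'\wedge T'$ holds if and only if $S\wedge T$, since $\lambda$ is real. Thus the statement reduces to: condition $(i)$ (with $\lambda$ folded into the operators) is equivalent to ``$S'\wedge T'$ and $\overline{\ran M_{S',T'}}=\mathscr H\times\mathscr K$.'' I would therefore prove the clean version with $\lambda=1$ and then invoke the scaling at the very end.

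For $(i)\Rightarrow(ii)$: assume $\overline{G(S')}\oplus F_{\mathscr K,\mathscr H}\overline{G(T')}=\mathscr H\times\mathscr K$ is an \emph{orthogonal} direct sum. Orthogonality of the two summands says precisely that for all $h\in\dom S$, $k\in\dom T$ we have $\langle (h,S'h),F_{\mathscr K,\mathscr H}(k,T'k)\rangle=0$, i.e. $\langle (h,S'h),(T'k,-k)\rangle=0$, which is $\langle h,T'k\rangle-\langle S'h,k\rangle=0$; that is exactly $S'\wedge T'$ (equivalently $S\wedge T$), giving $(ii)(a)$. For $(ii)(b)$, note that $\overline{G(S')}+F_{\mathscr K,\mathscr H}\overline{G(T')}$ being all of $\mathscr H\times\mathscr K$ forces its closure to be everything; comparing with the characterization \eqref{eq3} of dense range of $M_{S',T'}$ — namely $\overline{\overline{G(S')}+F_{\mathscr K,\mathscr H}\overline{G(T')}}=\mathscr H\times\mathscr K$ — gives $(ii)(b)$ immediately (in fact $(i)$ is formally stronger, asserting the sum is already closed, but the closure statement is all we need here).

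For $(ii)\Rightarrow(i)$: from $(ii)(a)$, $S'\wedge T'$, the computation in the previous paragraph run backwards shows $G(S')\perp F_{\mathscr K,\mathscr H}G(T')$, and since orthogonality passes to closures, $\overline{G(S')}\perp F_{\mathscr K,\mathscr H}\overline{G(T')}$; hence their sum is an \emph{orthogonal} (thus closed) direct sum — a closed subspace. From $(ii)(b)$ and \eqref{eq3}, this closed subspace is dense in $\mathscr H\times\mathscr K$, hence equals $\mathscr H\times\mathscr K$, which is $(i)$. The main point to get right is the bookkeeping with the flip operator and its adjoint $F^*_{\mathscr H,\mathscr K}=-F_{\mathscr K,\mathscr H}$: one must verify carefully that the orthogonality of $G(S')$ and $F_{\mathscr K,\mathscr H}G(T')$ transcribes to $S'\wedge T'$ with the correct signs, and that an orthogonal sum of two closed subspaces is closed — the latter being standard, so the genuine obstacle is merely the sign/flip computation rather than anything deep. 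I would close by restoring the $\lambda$ via $M_{S,T}(\lambda)=\lambda M_{\frac1\lambda S,\frac1\lambda T}$ and the fact that scaling by a nonzero real does not change either $S\wedge T$ or the closure of the range.
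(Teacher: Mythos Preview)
Your argument is correct and follows the same route as the paper's proof: identify orthogonality of $G(\tfrac1\lambda S)$ and $F_{\mathscr K,\mathscr H}G(\tfrac1\lambda T)$ with $S\wedge T$, and use \eqref{eq3} (applied to the scaled pair via $M_{S,T}(\lambda)=\lambda M_{\frac1\lambda S,\frac1\lambda T}$) for the dense-range condition. The paper compresses this into two sentences and is slightly elliptic where it asserts that $(ii)(b)$ is equivalent to the \emph{unclosed} sum being $\mathscr H\times\mathscr K$; your version makes explicit the point that orthogonality forces the sum of the two closed subspaces to be closed, which is exactly what bridges \eqref{eq3} to the statement without the outer closure bar.
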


\begin{proof}
$G\bigl( \frac{1}{\lambda}S\bigr)$ and $F_{\mathscr{K}, \mathscr{H}}G\bigl( \frac{1}{\lambda}T\bigr)$ are orthogonal if and only if $S\wedge T$ holds true and, in view of \eqref{eq3}, we have that $(ii)\ (b)$ is equivalent to the following identity
\begin{equation*}
\overline{G\Bigl( \frac{1}{\lambda}S\Bigr)} + F_{\mathscr{K}, \mathscr{H}}\overline{G\Bigl( \frac{1}{\lambda}T\Bigr)}=\mathscr{H}\times\mathscr{K}. 
\end{equation*}

The proof is complete.
\end{proof}

We are in a position to prove the main result in this section.

\begin{theorem}\label{t14}
Let the linear operators $S, T$ which act between Hilbert spaces $\mathscr{H}$ and $\mathscr{K},$ respectively $\mathscr{K}$ and $\mathscr{H}$ satisfy the property
\begin{equation}\label{eq4}
\overline{G\Bigl( \frac{1}{\lambda}S\Bigr)}\oplus F_{\mathscr{K}, \mathscr{H}}
\overline{G\Bigl( \frac{1}{\lambda}T\Bigr)}=\mathscr{H}\times\mathscr{K}
\end{equation}
for a certain given $\lambda\in\mathbb{R},\ \lambda\ne 0.$

Then the following two statements are equivalent:

$(i)$ $\ S$ is closable;

$(ii)$ $\ T$ is densely defined.

If either of the conditions $(i)$ or $(ii)$ is satisfied then the equality $\bar{S}=T^*$ holds true.

Conversely, if $\bar{S}=T^*$ then \eqref{eq4} holds true for every $\lambda\in\mathbb{R},\ \lambda\ne 0.$
\end{theorem}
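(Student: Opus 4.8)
The plan is to reduce everything to manipulations of closed subspaces of $\mathscr{H}\times\mathscr{K}$. Write $A:=\overline{G(\tfrac1\lambda S)}$ and $B:=F_{\mathscr{K},\mathscr{H}}\overline{G(\tfrac1\lambda T)}$, so that \eqref{eq4} says precisely that $A\oplus B=\mathscr{H}\times\mathscr{K}$ as an orthogonal sum, i.e. (since $A,B$ are closed and mutually orthogonal) $A=B^\perp$ and $B=A^\perp$. Note also that $(h,k)\mapsto(h,\tfrac1\lambda k)$ is a homeomorphism of $\mathscr{H}\times\mathscr{K}$ carrying $G(S)$ onto $G(\tfrac1\lambda S)$ and mapping $\{0\}\times\mathscr{K}$ onto itself; hence $A$ is the graph of a (closed) operator if and only if $\overline{G(S)}$ is, that is, if and only if $S$ is closable, and in that case $\overline{G(S)}$ is recovered from $A$ by the inverse scaling.

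For the equivalence $(i)\Leftrightarrow(ii)$ I would compute $A\cap(\{0\}\times\mathscr{K})$. A point $(0,k)$ lies in $A=B^\perp$ iff it is orthogonal to each vector $F_{\mathscr{K},\mathscr{H}}(k',\tfrac1\lambda Tk')=(\tfrac1\lambda Tk',-k')$, $k'\in\dom T$, i.e. iff $\langle k,k'\rangle=0$ for all $k'\in\dom T$, i.e. iff $k\in(\dom T)^\perp$. Thus $A\cap(\{0\}\times\mathscr{K})=\{0\}\times(\dom T)^\perp$, which is trivial exactly when $T$ is densely defined. Combined with the remark of the first paragraph, this shows that ``$A$ is a graph'', ``$S$ is closable'' and ``$T$ is densely defined'' are all equivalent.

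Now assume $(i)$, equivalently $(ii)$. Since $T$ is densely defined, so is $\tfrac1\lambda T$, with adjoint $\tfrac1\lambda T^*$ because $\lambda$ is real; hence \eqref{eq2} applied to $\tfrac1\lambda T$, being an orthogonal direct sum, gives $\overline{G(\tfrac1\lambda T)}^\perp=F^*_{\mathscr{K},\mathscr{H}}G(\tfrac1\lambda T^*)$. Applying the unitary $F_{\mathscr{K},\mathscr{H}}$ and using $F_{\mathscr{K},\mathscr{H}}F^*_{\mathscr{K},\mathscr{H}}=\mathrm{id}$ yields $A=B^\perp=F_{\mathscr{K},\mathscr{H}}\bigl(\overline{G(\tfrac1\lambda T)}^\perp\bigr)=G(\tfrac1\lambda T^*)$. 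Thus $\overline{G(\tfrac1\lambda S)}=G(\tfrac1\lambda T^*)$, and undoing the scaling gives $\overline{G(S)}=G(T^*)$, i.e. $\bar S=T^*$. Conversely, if $\bar S=T^*$ then $T$ is densely defined and $\overline{G(\tfrac1\lambda S)}=G(\tfrac1\lambda T^*)$ for every $\lambda\ne0$; running the previous computation backwards — apply $F_{\mathscr{K},\mathscr{H}}$ to \eqref{eq2} for $\tfrac1\lambda T$ and substitute $G(\tfrac1\lambda T^*)=\overline{G(\tfrac1\lambda S)}$ — produces exactly \eqref{eq4}.

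I do not expect a genuine obstacle. The points that need care are purely bookkeeping: the scaling by $1/\lambda$ (harmless as $\lambda\ne0$) and which copies of $\mathscr{H}$ and $\mathscr{K}$ each flip $F$ connects; and, in the implication $(i)\Rightarrow(ii)$, the discipline of extracting density of $T$ from the orthogonal decomposition directly, rather than speaking of $T^*$ before $T$ is known to be densely defined.
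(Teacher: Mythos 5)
Your argument is correct and is essentially the paper's own proof: both identify $\overline{G(\tfrac1\lambda S)}$ as the orthocomplement of $F_{\mathscr{K},\mathscr{H}}\overline{G(\tfrac1\lambda T)}$, observe that $(0,k)$ lies in it exactly when $k\in\{\dom T\}^\perp$ (so closability of $S$ is equivalent to density of $\dom T$), and then compare \eqref{eq4} with \eqref{eq2} applied to $\tfrac1\lambda T$ to conclude $\overline{G(\tfrac1\lambda S)}=G(\tfrac1\lambda T^*)$, with the converse obtained by running the same identification backwards. The only difference is presentational (your explicit $A=B^\perp$ bookkeeping and the scaling homeomorphism), not mathematical.
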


\begin{proof}
The operator $S$ is closable if and only if, given $k\in\mathscr{K},\ \begin{pmatrix} 0 \\ k\end{pmatrix}\in\overline{G\bigl( \frac{1}{\lambda}S\bigr)}$ implies $k=0.$
But, under our assumption, $\begin{pmatrix} 0 \\ k\end{pmatrix}\in\overline{G\bigl( \frac{1}{\lambda}S\bigr)}$ means that, for each $k'\in \dom T,$
\begin{equation*}
0=\left\langle \begin{pmatrix} 0 \\ k\end{pmatrix}, \begin{pmatrix} \frac{1}{\lambda}Tk' \\ -k'\end{pmatrix}\right\rangle=-\langle k, k'\rangle
\end{equation*}
holds true. Equivalently, $k\in\{ \dom T\}^\perp.$ This last condition implies $k=0$ if and only if $\dom T$ is a dense subspace in $\mathscr{K}.$ But in this case $T^*$ exists and satisfies, by \eqref{eq2}, the following identity
\begin{equation*}
G\Bigl( \frac{1}{\lambda}T^*\Bigr)\oplus F_{\mathscr{K}, \mathscr{H}}
\overline{G\Bigl( \frac{1}{\lambda}T\Bigr)}=\mathscr{H}\times\mathscr{K}.
\end{equation*}
Therefore $G\bigl( \frac{1}{\lambda}T^*\bigr)=\overline{G\bigl( \frac{1}{\lambda}S\bigr)}=G\bigl( \frac{1}{\lambda}\bar{S}\bigr).$ In other words $T^*=\bar{S},$ as required.

Conversely, if $\bar{S}=T^*$ then, for every $\lambda\in\mathbb{R},\ \lambda\ne 0,$
\begin{eqnarray*}
\overline{G\Bigl( \frac{1}{\lambda}S\Bigr)}\oplus F_{\mathscr{K}, \mathscr{H}}
\overline{G\Bigl( \frac{1}{\lambda}T\Bigr)}=G\Bigl( \frac{1}{\lambda}T^*\Bigr)\oplus F_{\mathscr{K}, \mathscr{H}}
\overline{G\Bigl( \frac{1}{\lambda}T\Bigr)}=\mathscr{H}\times\mathscr{K}
\end{eqnarray*}
and the proof is complete.
\end{proof}

In the particular case when $S$ is densely defined and $T=S^*$ this theorem takes the form of the following classical result of J. von Neumann \cite{neu32}.
\begin{corollary}\label{c42p}
Let $S$ be a densely defined operator which acts between Hilbert spaces $\mathscr{H}$ and $\mathscr{K}$. Then $S$ is closable if and only if $S^*$ is densely defined in which case $\bar{S}=S^{**}$. 
\end{corollary}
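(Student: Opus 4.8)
The plan is to deduce Corollary~\ref{c42p} directly from Theorem~\ref{t14} by specializing to the pair $(S, T) = (S, S^*)$, so the only real work is to verify that the hypothesis \eqref{eq4} of the theorem is automatically satisfied in this situation. First I would fix $\lambda \in \mathbb{R}$, $\lambda \ne 0$, and recall that since $S$ is densely defined, its adjoint $S^*$ exists; moreover, $\frac{1}{\lambda} S$ is densely defined and $\left(\frac{1}{\lambda} S\right)^* = \frac{1}{\lambda} S^*$. Applying \eqref{eq2} to the operator $\frac{1}{\lambda}S$ gives
\begin{equation*}
\overline{G\Bigl( \tfrac{1}{\lambda}S\Bigr)} \oplus F^*_{\mathscr{H}, \mathscr{K}} G\Bigl( \tfrac{1}{\lambda}S^*\Bigr) = \mathscr{H} \times \mathscr{K}.
\end{equation*}
Since $F^*_{\mathscr{H},\mathscr{K}} = -F_{\mathscr{K},\mathscr{H}}$ and the sign does not affect a graph ($-F_{\mathscr{K},\mathscr{H}} G = F_{\mathscr{K},\mathscr{H}}(-G)$, and $-G\bigl(\frac1\lambda S^*\bigr) = G\bigl(\frac1\lambda S^*\bigr)$ as sets? no) — here I should be a little careful: $F^*_{\mathscr{H},\mathscr{K}}G\bigl(\frac1\lambda S^*\bigr) = -F_{\mathscr{K},\mathscr{H}}G\bigl(\frac1\lambda S^*\bigr)$, and for any subspace $V$ one has $-V = V$, so this equals $F_{\mathscr{K},\mathscr{H}}G\bigl(\frac1\lambda S^*\bigr)$. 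With $T := S^*$ this is precisely \eqref{eq4}.

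Having checked the hypothesis, the equivalence of $(i)$ and $(ii)$ in the corollary — namely, "$S$ closable $\iff$ $S^*$ densely defined" — is the equivalence of conditions $(i)$ and $(ii)$ in Theorem~\ref{t14} applied to the pair $(S, S^*)$, since condition $(ii)$ there reads "$T = S^*$ is densely defined." Likewise, when either condition holds, Theorem~\ref{t14} yields $\bar{S} = T^* = S^{**}$, which is exactly the asserted identity $\bar{S} = S^{**}$. The converse direction of the corollary (if $S^*$ is densely defined then $S$ is closable) is already contained in the stated equivalence, so nothing further is needed.

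I do not anticipate a serious obstacle here; the corollary is essentially a dictionary translation of Theorem~\ref{t14}. The one point requiring genuine (if minor) attention is the bookkeeping with the flip operator: \eqref{eq2} is stated with $F^*_{\mathscr{H},\mathscr{K}}$, whereas \eqref{eq4} is phrased with $F_{\mathscr{K},\mathscr{H}}$, and one must confirm that $F^*_{\mathscr{H},\mathscr{K}}G\bigl(\frac1\lambda S^*\bigr) = F_{\mathscr{K},\mathscr{H}}G\bigl(\frac1\lambda S^*\bigr)$. This follows from $F^*_{\mathscr{H},\mathscr{K}} = -F_{\mathscr{K},\mathscr{H}}$ together with the fact that a linear subspace is invariant under multiplication by $-1$; orthogonality of the sum is preserved since $-1$ times a unitary is unitary. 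Once this identification is in place, the proof is a one-line invocation of the theorem.
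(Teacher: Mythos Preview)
Your approach is correct and matches the paper's: the corollary is presented there simply as the special case $T=S^*$ of Theorem~\ref{t14}, and the paper does not even spell out the verification of \eqref{eq4} that you carry out. You should tidy the exposition (drop the ``? no'' aside) and note explicitly that $S^*$ is always closed, so $G\bigl(\tfrac{1}{\lambda}S^*\bigr)=\overline{G\bigl(\tfrac{1}{\lambda}S^*\bigr)}$, which is what matches the closure appearing in \eqref{eq4}.
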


In the most general context of Theorem \ref{t14} neither $S$ nor $T^*$ must be densely defined as we can observe from the following example.
\begin{example}\label{e42s}
Let $\mathscr{H}$ be a Hilbert space (infinite dimensional), $\mathscr{D}$ one of its dense proper subspaces and $\mathscr{X}\ne\{0\}$ a subspace of $\mathscr{H}$ satisfying $\mathscr{D}\cap\mathscr{X}=\{0\}$. We define a densely defined linear operator $T$ on $\mathscr{H}$ by
\begin{equation*}
\mathscr{H}\supseteq\dom T:=\mathscr{X}\dotplus\mathscr{D}\ni x+d\mapsto x+Ad\in\mathscr{H},
\end{equation*}
where $A$ is a given (bounded) positive operator on $\mathscr{H}$ with norm $\|A\|<1$ (the symbol ``$\dotplus$'' denotes a direct sum). An element $h\in\mathscr{H}$ is in the domain of $T^*$ if and only if
\begin{equation}\label{eq5p}
|\langle x+Ad,h\rangle|\le M\|x+d\|,\quad x\in\mathscr{X},\ d\in\mathscr{D}
\end{equation}
for a certain given $M>0$. By density any fixed $x\in\mathscr{X}$ is the limit of a sequence $(d_n)_{n\ge 0}$ of vectors in $\mathscr{D}$. The inequality \eqref{eq5p} written for $x$ and $-d_n,\ n\ge 0$ proves, by passing to limit, that $\langle x-Ax,h\rangle=0$. Hence any element $h\in\dom T^*$ is orthogonal to $(1-A)\mathscr{X}$. The converse is obviously true (one can take $M:=\|A\|\|h\|$), so
\begin{equation*}
\dom T^*=[(1-A)\mathscr{X}]^\perp.
\end{equation*}
Clearly $\dom T^*$ is closed and, since $1-A$ is invertible, $\dom T^*\ne\mathscr{H}$. We deduce that $T^*=A|_{\dom T^*}$ is not densely defined. We arrive to the same conclusion for the operator $S:=A|_{\dom S}$, where $\dom S$ is any given subspace of $\mathscr{H}$ which is dense in $\dom T^*$. One can easily observe that $S$ is closable and $\bar{S}=T^*$.\qed
\end{example} 

If $S$ and $T$ are adjoint to each other and $T$ is densely defined then, since $S\subseteq T^*$, $S$ is closable. The converse ($S$ closable implies $T$ densely defined) is generally not true without the additional assumption $\overline{\ran M_{S,T}(\lambda)}=\mathscr{H}\times\mathscr{K}\ (\lambda\in\R,\ \lambda\ne 0)$ as the following example shows.
\begin{example}\label{e42t}
Let $\mathscr{H}\ne\{0\}$ be a Hilbert space, $\mathscr{H}_0\ne\mathscr{H}$ one of its closed subspaces and $A$ a bounded selfadjoint operator on $\mathscr{H}$. Then $S=A|_{\mathscr{H}_0}:\mathscr{H}_0\to\mathscr{H}$ is obviously symmetric (i.e., $S$ and $T:=S$ are adjoint to each other), closed, but $T\ (=S)$ is not densely defined.\qed
\end{example}

In the example below we show that the condition $\overline{\ran M_{S,T}(\lambda)}=\mathscr{H}\times\mathscr{K}\ (\lambda\in\R,\ \lambda\ne 0)$ is essential for the equality $\bar{S}=T^*$ and cannot be obtained as a consequence of the other hypotheses of Theorem \ref{t14} ($S\wedge T$, $S$ is closable, $T$ is densely defined).
\begin{example}(\cite[Problem VIII.5]{rs80})
Let $\mathscr{H}=\ell^2_{\Z_+}(\C)$ be the Hilbert space of all square summable sequences of complex numbers and define the (unbounded) operator $S$ on $\mathscr{H}$ as
\begin{equation*}
\dom S:=\{(h_n)_{n\ge 0}\in\mathscr{H}\mid\text{for some }N\ge 0,\ \sum_{n=0}^Nh_n=0\text{ and }h_n=0\text{ if }n>N\}
\end{equation*}
and
\begin{equation*}
S(h_n)_{n\ge 0}:=(h'_n)_{n\ge 0},\ (h_n)_{n\ge 0}\in\dom S,
\end{equation*}
where $h'_n=
\begin{cases}
ih_n+2i\sum\limits_{m=0}^{n-1}h_m,& n\ge 1\\
ih_0,& n=0
\end{cases}
,\ n\ge 0$.

$\bullet$ \textit{$S$ is densely defined}.

Let us firstly observe that, for every $m\ge 0$, $e_m:=(\delta_{mp})_{p\ge 0}\in\overline{\dom S}$ being the limit of the sequence $(e_m-\frac{1}{n+1}\sum_{k=0}^ne_{m+k})_{n\ge 0}$ of vectors in $\dom S$. The conclusion follows by the remark that $\lim_{n\to\infty}\sum_{m=0}^nh_me_m=h$, for every $h=(h_n)_{n\ge 0}\in\mathscr{H}$.

$\bullet$ \textit{$S$ is symmetric}.

Easy computations show that
\begin{equation*}
\langle Sh,h\rangle=4\img \Bigl(\sum_{0\le i<j\le N-1}\bar{h_i}h_j\Bigr)
\end{equation*}
for every $h=(h_n)_{n\ge 0}\in\dom S$, where $N\ge 0$ is such that $\sum_{n=0}^Nh_n=0$ and $h_n=0$ for $n>N$.

$\bullet$ \textit{$S$ is not essentially selfadjoint}.

Since, for every $h\in\dom S$,
\begin{equation*}
\langle Sh,e_0\rangle=ih_0=\langle h,-ie_0\rangle
\end{equation*}
we deduce that $e_0\in\ker(S^*+i)$. Hence $\ker(S^*+i)\ne\{0\}$, so $S$ cannot be essentially selfadjoint.

We proved that $S$ and $T:=S$ are adjoint to each other, $S$ is closable and $T$ is densely defined, but the equality $\bar{S}=T^*$ is not satisfied.\qed
\end{example}

In view of Proposition \ref{p3} we can reformulate Theorem \ref{t14} as follows.

\begin{corollary}\label{c15}
Let $S$ and $T$ be densely defined linear operators between Hilbert spaces $\mathscr{H}$ and $\mathscr{K}$, respectively $\mathscr{K}$ and $\mathscr{H}.$ The following statements are equivalent:

\noindent
$(i)$ $\ S$ is closable and $\bar{S}=T^*;$

\noindent
$(ii)$
\begin{itemize}
\item[$(a)$] $S\wedge T;$
\item[$(b)$] $-\lambda^2\not\in\sigma_p(S^*T^*)$ for a certain (and also for all) $\lambda\in\mathbb{R},\ \lambda\ne 0;$
\end{itemize}

\noindent
$(iii)$
\begin{itemize}
\item[$(a)$] $S\wedge T;$
\item[$(b)$] $-\lambda^2\not\in\sigma_p(T^*S^*)$ for a certain (and also for all) $\lambda\in\mathbb{R},\ \lambda\ne 0.$
\end{itemize}
\end{corollary}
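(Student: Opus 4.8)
The plan is to derive Corollary \ref{c15} directly from Theorem \ref{t14} together with the spectral characterization of the density of $\ran M_{S,T}(\lambda)$ recorded in Remark \ref{r5} (which in turn rests on Proposition \ref{p3}). Since both $S$ and $T$ are assumed densely defined here, the hypothesis \eqref{eq4} of Theorem \ref{t14} is, by Proposition \ref{p13} and the computation preceding Proposition \ref{p3}, equivalent to the conjunction of $S\wedge T$ and $\overline{\ran M_{S,T}(\lambda)}=\mathscr{H}\times\mathscr{K}$; and by Remark \ref{r5} the latter density condition is equivalent to $-\lambda^2\notin\sigma_p(S^*T^*)$, and also to $-\lambda^2\notin\sigma_p(T^*S^*)$. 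So the three sets of conditions in (ii) and (iii) are just the hypothesis \eqref{eq4} rewritten in spectral language, once one knows $S\wedge T$.

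Concretely, I would argue as follows. First assume (i): $S$ is closable and $\bar S=T^*$. Then $S\subseteq\bar S=T^*$, so $S\wedge T$ holds, giving (ii)(a) and (iii)(a). By the ``conversely'' part of Theorem \ref{t14}, $\bar S=T^*$ forces \eqref{eq4} for every $\lambda\in\mathbb{R}$, $\lambda\ne0$; by Remark \ref{r5} this says precisely $-\lambda^2\notin\sigma_p(S^*T^*)$ and $-\lambda^2\notin\sigma_p(T^*S^*)$ for all such $\lambda$, which is (ii)(b) and (iii)(b) in their ``for all'' form. Conversely, assume (ii) (the argument for (iii) is identical, using the $T^*S^*$ clause of Remark \ref{r5}). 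From $S\wedge T$ and $-\lambda^2\notin\sigma_p(S^*T^*)$ for some fixed $\lambda$, Remark \ref{r5} gives $\overline{\ran M_{S,T}(\lambda)}=\mathscr{H}\times\mathscr{K}$, hence by Proposition \ref{p13} the hypothesis \eqref{eq4} of Theorem \ref{t14} is satisfied for that $\lambda$. Since $T$ is densely defined, alternative (ii) of Theorem \ref{t14} holds, so $S$ is closable and $\bar S=T^*$; this is (i). The ``for a certain'' versus ``for all'' equivalence is automatic because (i) is a statement not referring to $\lambda$: if it holds for one $\lambda$ it holds, hence the spectral condition holds for all $\lambda$.

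There is essentially no obstacle here beyond bookkeeping: the real content was already carried by Theorem \ref{t14} and by the spectral translation in Remark \ref{r5}. The only mild care needed is to make sure that in applying Theorem \ref{t14} one is allowed to invoke whichever of its equivalent alternatives is convenient — and indeed, with $T$ densely defined one is in case (ii) of that theorem automatically, which is exactly what is available. One should also note explicitly that the passage from $\sigma_p(S^*T^*)$ to $\sigma_p(T^*S^*)$ does not require closedness of $S,T$ or nonemptiness of resolvent sets (that stronger hypothesis is only needed in Corollary \ref{c4} to pass to $ST$, $TS$ themselves); here it is the pure point-spectrum statement from Remark \ref{r5}/Proposition \ref{p3}, valid for arbitrary densely defined $S,T$. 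With that remark in place, the proof is a two- or three-line deduction.
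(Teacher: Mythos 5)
Your proposal is correct and follows essentially the same route as the paper, which derives Corollary \ref{c15} by reformulating Theorem \ref{t14} via Proposition \ref{p3} (and Remark \ref{r5}), using Proposition \ref{p13} to identify the hypothesis \eqref{eq4} with the conjunction of $S\wedge T$ and the density of $\ran M_{S,T}(\lambda)$. Your added observations --- that the ``for a certain/for all'' equivalence is automatic since $(i)$ does not mention $\lambda$, and that the point-spectrum equivalence between $S^*T^*$ and $T^*S^*$ needs no closedness or resolvent hypotheses --- are accurate and make the bookkeeping explicit.
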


We also obtain, in particular, new conditions which characterize essential selfadjointness.

\begin{corollary}\label{c16}
Let $S$ be a linear operator acting on the Hilbert space $\mathscr{H}.$ The following statements are equivalent:

\noindent
$(i)$ $\ S$ is essentially selfadjoint;

\noindent
$(ii)$
\begin{itemize}
\item[$(a)$] $S$ is densely defined;
\item[$(b)$] $S\wedge S;$
\item[$(c)$] $\overline{\ran M_{S, S}(\lambda)}=\mathscr{H}\times\mathscr{H}$ for a certain (and also for all) $\lambda\in\mathbb{R},\ \lambda\ne 0;$
\end{itemize}

\noindent
$(iii)$
\begin{itemize}
\item[$(a)$] $S$ is closable;
\item[$(b)$] $S\wedge S;$
\item[$(c)$] $\overline{\ran M_{S, S}(\lambda)}=\mathscr{H}\times\mathscr{H}$ for a certain (and also for all) $\lambda\in\mathbb{R},\ \lambda\ne 0.$
\end{itemize}
\end{corollary}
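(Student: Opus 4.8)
The plan is to reduce everything to Theorem~\ref{t14} (together with Proposition~\ref{p13}) applied to the pair $(S,T)$ with $T:=S$, after recording the elementary fact that, for a densely defined closable operator $S$, the closure $\bar S$ is selfadjoint exactly when $\bar S=S^*$ (indeed $(\bar S)^*=S^*$ always, since the adjoint depends only on the closure of the graph). The first step is a translation: by Proposition~\ref{p13} with $\mathscr{K}=\mathscr{H}$ and $T=S$, the conjunction of $S\wedge S$ and $\overline{\ran M_{S,S}(\lambda)}=\mathscr{H}\times\mathscr{H}$ is precisely the hypothesis
\begin{equation*}
\overline{G\Bigl(\tfrac1\lambda S\Bigr)}\oplus F_{\mathscr{H},\mathscr{H}}\overline{G\Bigl(\tfrac1\lambda S\Bigr)}=\mathscr{H}\times\mathscr{H}
\end{equation*}
appearing in Theorem~\ref{t14}. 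So conditions $(b)$--$(c)$ of both $(ii)$ and $(iii)$ are the same as \eqref{eq4} (written for $T=S$).

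For $(iii)\Rightarrow(i)$ (the stronger-looking direction, since its $(a)$ is weaker), assume $(iii)$. By the translation, \eqref{eq4} holds for the given $\lambda$; since $S$ is closable, Theorem~\ref{t14} gives that $T=S$ is densely defined and $\bar S=T^*=S^*$. In particular $S$ is densely defined, so $\bar S=S^{**}$ by Corollary~\ref{c42p}, whence $(\bar S)^*=S^*=\bar S$; thus $\bar S$ is selfadjoint and $S$ is essentially selfadjoint. The implication $(ii)\Rightarrow(i)$ is the same argument, with $(a)$ of $(ii)$ supplying density of $S$ directly.

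For $(i)\Rightarrow(ii)$ and $(i)\Rightarrow(iii)$, if $S$ is essentially selfadjoint then by definition it is densely defined and closable, so $(a)$ holds in both cases; moreover $\bar S$ is selfadjoint hence symmetric, and therefore so is its restriction $S$, i.e. $S\wedge S$. Selfadjointness of $\bar S$ reads $\bar S=(\bar S)^*=S^*=T^*$ (with $T=S$), so the converse part of Theorem~\ref{t14} yields \eqref{eq4} for every $\lambda\in\mathbb{R}$, $\lambda\ne0$; Proposition~\ref{p13} then returns $\overline{\ran M_{S,S}(\lambda)}=\mathscr{H}\times\mathscr{H}$ for all such $\lambda$, which establishes $(c)$ and completes $(ii)$ and $(iii)$.

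I do not anticipate a genuine obstacle: the substance is entirely contained in Theorem~\ref{t14}. The only points that need attention are the bookkeeping of the ``for a certain (and also for all) $\lambda$'' clause --- automatic here, because the implications toward $(i)$ use a single $\lambda$ while the implication out of $(i)$ produces all $\lambda$ --- and the (standard but crucial) remark that, for densely defined closable $S$, selfadjointness of $\bar S$ is equivalent to the identity $\bar S=S^*$.
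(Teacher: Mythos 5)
Your proposal is correct and follows the paper's own route: the paper proves this corollary simply by invoking Theorem~\ref{t14} with $T=S$, and your argument is exactly that, with the (implicitly needed) translation through Proposition~\ref{p13} and the standard identities $(\bar S)^*=S^*$, $\bar S=S^{**}$ spelled out. No gaps.
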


\begin{proof}
It follows by Theorem \ref{t14}, the case $T=S.$
\end{proof}

\begin{corollary}\label{c17}
Let $S$ be a linear operator acting on the Hilbert space $\mathscr{H}.$ The following statements are equivalent:

\noindent
$(i)$ $\ S$ is essentially selfadjoint;

\noindent
$(ii)$
\begin{itemize}
\item[$(a)$] $S$ is densely defined;
\item[$(b)$] $S \wedge S$;
\item[$(c)$] $-\lambda^2\not\in\sigma_p(S^{*2})$ for a certain (and also for all) $\lambda\in\mathbb{R},\ \lambda\ne 0.$
\end{itemize}
\end{corollary}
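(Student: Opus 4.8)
The plan is to obtain this as the spectral reformulation of Corollary~\ref{c16} in the special case $T=S$. By Corollary~\ref{c16}, $S$ is essentially selfadjoint if and only if $S$ is densely defined, $S\wedge S$, and $\overline{\ran M_{S,S}(\lambda)}=\mathscr{H}\times\mathscr{H}$ for a certain (and also for all) $\lambda\in\mathbb{R},\ \lambda\ne 0$; so it suffices to check that, under the standing assumption that $S$ is densely defined, the last condition is equivalent to $-\lambda^2\notin\sigma_p(S^{*2})$ for the same range of $\lambda$.

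First I would invoke Remark~\ref{r5} (see also Proposition~\ref{p3}): for densely defined $S$ and $T$ and $\lambda\ne 0$, the matrix $M_{S,T}(\lambda)$ has dense range if and only if $-\bar\lambda^2\notin\sigma_p(S^*T^*)$. Specializing to $T=S$ and $\lambda\in\mathbb{R}$ (so that $\bar\lambda=\lambda$ and $S^*T^*=S^{*2}$) yields, for each such $\lambda$,
\[
\overline{\ran M_{S,S}(\lambda)}=\mathscr{H}\times\mathscr{H}\iff -\lambda^2\notin\sigma_p(S^{*2}).
\]
Since this equivalence holds for every individual $\lambda\in\mathbb{R},\ \lambda\ne 0$, the quantifier phrase ``for a certain (and also for all)'' transports verbatim from condition $(c)$ of Corollary~\ref{c16}$(ii)$ to condition $(c)$ of the present statement. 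Substituting finishes the argument.

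The only points requiring care — rather than a genuine obstacle — are bookkeeping ones: applying Remark~\ref{r5} only in the densely-defined regime (guaranteed here by hypothesis $(ii)\ (a)$), and verifying the conjugation/sign identities $-\bar\lambda^2=-\lambda^2$ and $S^*T^*=S^{*2}$ for real $\lambda$ and $T=S$. Alternatively, one can bypass Corollary~\ref{c16} and argue directly from Corollary~\ref{c15} with $T=S$, after observing that ``$S$ densely defined, closable, with $\bar S=S^*$'' is exactly the definition of essential selfadjointness (since $\bar S$ is selfadjoint precisely when $\bar S=(\bar S)^*=S^*$).
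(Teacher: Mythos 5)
Your argument is correct and amounts to the same chain as the paper's: the paper simply cites Corollary~\ref{c15} (with $T=S$), which is itself Theorem~\ref{t14} combined with Proposition~\ref{p3}/Remark~\ref{r5}, i.e.\ exactly the conversion between dense range of $M_{S,S}(\lambda)$ and $-\lambda^2\notin\sigma_p(S^{*2})$ that you carry out explicitly; your closing alternative is precisely the paper's route. (Minor point in your favor: Remark~\ref{r5} as printed says ``$-\bar\lambda^2\in\sigma_p(S^*T^*)$'' where it clearly means ``$\notin$'', and you read it correctly.)
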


\begin{proof}
It follows by Corollary \ref{c15}. 
\end{proof}

Other applications of Theorem \ref{t14} are characterizations for selfadjointness (the case $T=S$ with $S$ closed) and, respectively, skewadjointness (the case $T=-S$ with $S$ closed).

\begin{corollary}\label{c18}
Let $S$ be a linear operator acting on the Hilbert space $\mathscr{H}.$ The following statements are equivalent:

\noindent
$(i)$ $\ S$ is (densely defined and) selfadjoint;

\noindent
$(ii)$
\begin{itemize}
\item[$(a)$] $S$ is closed;
\item[$(b)$] $S \wedge S$;
\item[$(c)$] $\overline{\ran M_{S, S}(\lambda)}=\mathscr{H}\times\mathscr{H}$ for a certain (and also for all) $\lambda\in\mathbb{R},\ \lambda\ne 0.$
\end{itemize}
\end{corollary}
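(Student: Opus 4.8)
The plan is to obtain this statement as the special case $T:=S$ of Theorem \ref{t14}, with Proposition \ref{p13} used to pass between condition \eqref{eq4} and the conjunction of the properties $S\wedge S$ and $\overline{\ran M_{S,S}(\lambda)}=\mathscr{H}\times\mathscr{H}$.

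First I would treat $(ii)\Rightarrow(i)$. Assuming $(a)$--$(c)$ for some $\lambda\in\R$, $\lambda\ne 0$, I would invoke Proposition \ref{p13} (with $T=S$) to see that $(b)$ and $(c)$ together amount to \eqref{eq4}, namely $\overline{G\bigl(\frac{1}{\lambda}S\bigr)}\oplus F_{\mathscr{H},\mathscr{H}}\overline{G\bigl(\frac{1}{\lambda}S\bigr)}=\mathscr{H}\times\mathscr{H}$. Since a closed operator is in particular closable, Theorem \ref{t14} applies and yields that $T=S$ is densely defined with $\bar S=T^*=S^*$; because $S$ is already closed, $\bar S=S$, whence $S=S^*$, i.e.\ $S$ is densely defined and selfadjoint.

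For the converse $(i)\Rightarrow(ii)$ I would note that a densely defined selfadjoint $S$ is closed and satisfies $S=S^*$, so that (taking $T=S$) one has $S\wedge S$ and $\bar S=T^*$. The converse half of Theorem \ref{t14} then yields \eqref{eq4} for every nonzero real $\lambda$, and a second application of Proposition \ref{p13} turns this into $\overline{\ran M_{S,S}(\lambda)}=\mathscr{H}\times\mathscr{H}$ for all such $\lambda$. This same observation also settles the ``for a certain (and also for all)'' phrasing: validity of the three conditions in $(ii)$ for a single $\lambda$ already forces selfadjointness, which in turn returns their validity for all $\lambda$.

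I do not anticipate a genuine obstacle, as every step is a direct citation of an already-established result; the only point requiring care is the passage $\bar S=S^*\Rightarrow S=S^*$, which works precisely because hypothesis $(ii)(a)$ guarantees $S=\bar S$, thereby upgrading the conclusion from essential selfadjointness (as in Corollary \ref{c16}) to genuine selfadjointness.
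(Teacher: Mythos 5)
Your proposal is correct and is exactly the route the paper intends: it presents Corollary \ref{c18} as the case $T=S$ (with $S$ closed) of Theorem \ref{t14}, with Proposition \ref{p13} translating condition \eqref{eq4} into $S\wedge S$ together with the density of $\ran M_{S,S}(\lambda)$. Your observation that hypothesis $(ii)(a)$ upgrades $\bar S=S^*$ to $S=S^*$ is precisely the point that distinguishes this corollary from the essential-selfadjointness version in Corollary \ref{c16}.
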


\begin{corollary}\label{c19}
Let $S$ be a linear operator acting on the Hilbert space $\mathscr{H}.$ The following statements are equivalent:

\noindent
$(i)$ $\ S$ is (densely defined and) skewadjoint;

\noindent
$(ii)$
\begin{itemize}
\item[$(a)$] $S$ is closed;
\item[$(b)$] $S \wedge (-S)$;
\item[$(c)$] $\overline{\ran M_{S, -S}(\lambda)}=\mathscr{H}\times\mathscr{H}$ for a certain (and also for all) $\lambda\in\mathbb{R},\ \lambda\ne 0.$
\end{itemize}
\end{corollary}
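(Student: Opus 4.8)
The plan is to obtain Corollary \ref{c19} from Theorem \ref{t14} applied to the pair $(S, T) = (S, -S)$, in complete parallel with the proof of Corollary \ref{c18} (which treats $T = S$). The link between the formulation of Theorem \ref{t14}, phrased through the orthogonal decomposition \eqref{eq4}, and conditions $(b)$--$(c)$ of $(ii)$ is Proposition \ref{p13}: for $\lambda \in \mathbb{R}$, $\lambda \ne 0$, property \eqref{eq4} with $T$ replaced by $-S$ holds precisely when $S \wedge (-S)$ and $\overline{\ran M_{S,-S}(\lambda)} = \mathscr{H} \times \mathscr{H}$.

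For $(i) \Rightarrow (ii)$ I would argue as follows: if $S$ is skewadjoint then $S = -S^* = (-S)^*$, so $S$ is closed (being an adjoint), which is $(a)$; moreover, setting $T := -S$, we have $\bar S = S = T^*$, so the converse half of Theorem \ref{t14} gives \eqref{eq4} for every nonzero real $\lambda$, and Proposition \ref{p13} then delivers $(b)$ and $(c)$ for all such $\lambda$.

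For $(ii) \Rightarrow (i)$: assume $(a)$--$(c)$ hold for some $\lambda \ne 0$. Proposition \ref{p13} converts $(b)$ and $(c)$ into \eqref{eq4} for that $\lambda$, with $T := -S$. Since $S$ is closed it is in particular closable, so the equivalence $(i) \Leftrightarrow (ii)$ of Theorem \ref{t14} forces $T = -S$ to be densely defined --- hence $S$ is densely defined --- and further $\bar S = T^* = (-S)^* = -S^*$. As $S$ is closed, $\bar S = S$, and therefore $S = -S^*$, i.e., $S$ is skewadjoint. Starting from ``a certain $\lambda$'', the implication $(i) \Rightarrow (ii)$ just proved then upgrades $(b)$--$(c)$ to ``all $\lambda$''.

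I do not expect any genuine obstacle here: the two small points to watch are the elementary identity $(-S)^* = -S^*$ for densely defined $S$, and keeping the two directions of Theorem \ref{t14} straight --- $(i) \Rightarrow (ii)$ invokes the converse (``$\bar S = T^* \Rightarrow$ \eqref{eq4}'') while $(ii) \Rightarrow (i)$ invokes the forward statement (``$S$ closable $\Leftrightarrow$ $T$ densely defined, with $\bar S = T^*$''). All the substantive work has already been carried out in Theorem \ref{t14} and Proposition \ref{p13}.
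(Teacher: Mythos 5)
Your proposal is correct and follows exactly the route the paper intends: Corollary \ref{c19} is stated as the case $T=-S$ (with $S$ closed) of Theorem \ref{t14}, with Proposition \ref{p13} translating the decomposition \eqref{eq4} into conditions $(b)$ and $(c)$. Both directions, the identity $(-S)^*=-S^*$, and the upgrade from ``a certain $\lambda$'' to ``all $\lambda$'' are handled properly.
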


We introduce the following notations for a given linear operators $S$ acting between Hilbert spaces $\mathscr{H}$ and $\mathscr{K}:$
\begin{multline*}
N_S:=\{ h\in\mathscr{H} \mid \mbox{ there exists } \{ h_n \}_n\subseteq\dom S \mbox{ converging to } h\\ \mbox{ such that } \{ Sh_n\}_n \mbox{ converges and, for any such } \{ h_n \}_n, \ \lim_{n\to\infty} Sh_n=0\};
\end{multline*}
\begin{multline*}
Q_S:=\{ k\in\mathscr{K} \mid \mbox{ there exists a convergent sequence } \{ h_h\}_n\subseteq\dom S\\ \mbox{ such that } \lim_{n\to \infty} Sh_n=k\};
\end{multline*}
\begin{multline*}
R_S:=\{ h\in\mathscr{H} \mid \sup\{ | \langle h, h'\rangle | : h'\in\dom S \mbox{ and } \| Sh' \|\le 1\}<\infty\}.
\end{multline*}

\begin{remark}\label{r47p}
If $S$ is closable then a simple application of the formula $\overline{G(S)}=G(\bar{S})$ shows that
\begin{equation*}
N_S=\ker \bar{S} \mbox{ and } Q_S=\ran \bar{S}.
\end{equation*}
\qed
\end{remark}

\begin{theorem}\label{t20}
Let $S$ and $T$ be linear operators acting between Hilbert spaces $\mathscr{H}$ and $\mathscr{K},$ respectively $\mathscr{K}$and $\mathscr{H}.$ The following statements are equivalent:

\noindent
$(i)$
\begin{itemize}
\item[$(a)$] $S$ is closable;
\item[$(b)$] $T$ is densely defined;
\item[$(c)$] $\bar{S}=T^*;$
\end{itemize}

\noindent
$(ii)$
\begin{itemize}
\item[$(a)$] $S\wedge T;$
\item[$(b)$] $\{ \ran T\}^\perp=N_S;$
\item[$(c)$] $Q_S=R_T.$
\end{itemize}
\end{theorem}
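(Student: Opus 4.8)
The plan is to prove $(i)\Rightarrow(ii)$ directly and $(ii)\Rightarrow(i)$ via the Arens result (Proposition \ref{p1.1}) applied to the inclusion $G(\bar S)\subseteq G(T^*)$, which is the natural vehicle since condition $(ii)$ is visibly a reformulation of ``$\bar S$ and $T^*$ have equal kernels and equal ranges''. For $(i)\Rightarrow(ii)$: if $\bar S=T^*$ then $S\subseteq\bar S=T^*$ gives $S\wedge T$ immediately. By Remark \ref{r47p}, $N_S=\ker\bar S$ and $Q_S=\ran\bar S$; combining with $\bar S=T^*$ and the standard identities $\ker T^*=\{\ran T\}^\perp$ and $\ran T^*=R_T$ (the latter being precisely \eqref{eq0}) yields $(b)$ and $(c)$.

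The substance is in $(ii)\Rightarrow(i)$. First I would show $T$ is densely defined: if $k\perp\dom T$, then taking $h\in\dom S$ with $Sh$ well-defined, $S\wedge T$ gives $\langle Sh,k\rangle=\langle h,Tk\rangle$ — wait, that is the wrong pairing. Instead, note $k\in\{\dom T\}^\perp$ and use that for any convergent $\{h_n\}\subseteq\dom S$ with $Sh_n\to$ something, the element $\lim Sh_n$ lies in $Q_S=R_T\subseteq\overline{\ran T}$ (since $R_T$, being the range of $T^*$ up to closure considerations, is contained in $\overline{\dom T}{}^{\text{-related}}$...). A cleaner route: once $S\wedge T$ holds, $S$ is closable iff $\{\dom T\}^\perp$ contains only those $k$ forcing triviality, and the argument in the proof of Theorem \ref{t14} shows $\begin{pmatrix}0\\k\end{pmatrix}\in\overline{G(S)}$ iff $k\in\{\dom T\}^\perp$; so $S$ closable $\iff$ $T$ densely defined, and I must extract one of these from $(b)$, $(c)$. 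The key observation is that $(c)$ forces $Q_S\subseteq R_T$, and $R_T=\ran T^*\subseteq\overline{\ran T^*}=\{\ker T\}^\perp\subseteq\mathscr{H}$ — this does not yet close the loop.

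The honest approach is: from $S\wedge T$ we always have $S^*\supseteq\overline{T}$-type relations are not available without density, so instead I would argue that $(b)$ with $N_S$ nonempty-behaviour forces $S$ closable as follows. Suppose $\{h_n\}\subseteq\dom S$, $h_n\to 0$, $Sh_n\to g$. For every $k\in\dom T$, $\langle g,k\rangle=\lim\langle Sh_n,k\rangle=\lim\langle h_n,Tk\rangle=0$, so $g\in\{\dom T\}^\perp$. But also $g\in Q_S=R_T$, and $R_T\cap\{\dom T\}^\perp$: any $g\in R_T$ satisfies $g\in\ran T^*$ once $T$ is densely defined — circular again. The resolution: $g\in\{\dom T\}^\perp$ and we only need $g\in \overline{\dom T}$ to conclude $g=0$; and indeed $R_T\subseteq\overline{\dom T}$ fails in general, so the real content must be that $N_S$ being well-defined as a subspace (i.e. the ``for any such $\{h_n\}$'' clause) together with $(b)$ encodes closability. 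I would therefore split: show first that $(ii)$ implies $\overline{\ran M_{S,T}(\lambda)}=\mathscr H\times\mathscr K$ for all $\lambda\neq 0$ by verifying, via \eqref{eq3} and Proposition \ref{p1}, that $\{\ran M_{S,T}(\lambda)\}^\perp=\overline{G(S)}^\perp\cap\{F_{\mathscr K,\mathscr H}\overline{G(T)}\}^\perp=\{0\}$ — here one uses that any $(u,v)$ in this intersection has $v\perp\dom T$ forcing (via $(b)$, since such $(u,v)$ relates to $N_S$) $v$ controllable, and $u\in R_T^\perp$-type condition; then apply Theorem \ref{t14}.

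The main obstacle, which I expect to occupy most of the proof, is precisely this extraction of ``$T$ densely defined'' (equivalently the density of $\ran M_{S,T}(\lambda)$) from the two ``size'' conditions $(b)$ and $(c)$ without presupposing it — the definitions of $N_S$, $Q_S$, $R_T$ are engineered (note especially the universal quantifier hidden in $N_S$) so that this works, and the careful unwinding of those quantifiers is the crux. Once density of $\dom T$ is in hand, Theorem \ref{t14} delivers $S$ closable and $\bar S=T^*$ at once, completing the cycle.
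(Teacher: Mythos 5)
Your architecture is correct and matches the paper's: $(i)\Rightarrow(ii)$ via Remark \ref{r47p} and \eqref{eq0}, and $(ii)\Rightarrow(i)$ by first establishing that $T$ is densely defined, then obtaining closability from $S\subseteq T^*$, and finally invoking Proposition \ref{p1.1} on the inclusion $\bar S\subseteq T^*$ once $(ii)(b)$ and $(ii)(c)$ are translated into $\ker\bar S=\ker T^*$ and $\ran\bar S=\ran T^*$. But you never actually carry out the one step you yourself identify as the crux --- extracting the density of $\dom T$ from $(ii)$ --- and each of your attempts runs in an unproductive direction (starting from a null sequence $h_n\to 0$ with $Sh_n\to g$ and trying to kill $g$, or trying to establish density of $\ran M_{S,T}(\lambda)$ so as to route through Theorem \ref{t14}). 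As written, this is a genuine gap, not merely an omitted routine verification.

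The missing observation is elementary: $\{\dom T\}^\perp\subseteq R_T$, because for $k\perp\dom T$ the supremum $\sup\{|\langle k,k'\rangle| : k'\in\dom T,\ \|Tk'\|\le 1\}$ is simply $0$. Hence by $(ii)(c)$ any $k\in\{\dom T\}^\perp$ lies in $Q_S$, so there is a convergent sequence $\{h_n\}_n\subseteq\dom S$ with $h_n\to h$ and $Sh_n\to k$. Then for every $k'\in\dom T$,
\begin{equation*}
0=\langle k,k'\rangle=\lim_{n\to\infty}\langle Sh_n,k'\rangle=\lim_{n\to\infty}\langle h_n,Tk'\rangle=\langle h,Tk'\rangle,
\end{equation*}
so $h\in\{\ran T\}^\perp=N_S$ by $(ii)(b)$; now the universal clause in the definition of $N_S$ (``for any such $\{h_n\}_n$, $\lim_{n\to\infty}Sh_n=0$''), applied to this particular sequence, gives $k=\lim_{n\to\infty}Sh_n=0$. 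This is exactly the mechanism you were circling around: you kept looking for an inclusion of the form $R_T\subseteq\overline{\dom T}$, or for a way to place $\lim Sh_n$ inside $\overline{\dom T}$, whereas the point is that the limit $h$ of the $h_n$ lands in $N_S$ and the quantifier hidden in $N_S$ then annihilates $k$. Once $\dom T$ is dense, the remainder of your plan (closability from $S\subseteq T^*$, Remark \ref{r47p} and \eqref{eq0} to rewrite $(b)$ and $(c)$ as equality of kernels and ranges, then Arens) goes through verbatim; the proposed detour through $\overline{\ran M_{S,T}(\lambda)}$ and Theorem \ref{t14} is unnecessary.
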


\begin{proof}
$(i)\Rightarrow(ii).$ Let us suppose that $(i)$ holds true.
Then, by Remark \ref{r47p} and \eqref{eq0},
\begin{equation*}
\{ \ran T \}^\perp=\ker T^*=\ker \bar{S}=N_S
\end{equation*}
and
\begin{equation*}
R_T=\ran T^*=\ran \bar{S}=Q_S.
\end{equation*}

$(ii)\Rightarrow(i).$ We check first $(i)(b).$ To this aim let $k\in\{ \dom T\}^\perp\subseteq R_T.$
Then, according to $(ii)(c),$ there exists a sequence $\{ h_n\}_n\subseteq\dom S$ which is convergent to some $h\in\mathscr{H}$ and such that $k=\lim_{n\to \infty} Sh_n.$
It follows that, for $k'\in\dom T,$ the following equalities hold:
\begin{equation*}
0=\langle k', k\rangle=\lim_{n\to\infty} \langle k', Sh_n\rangle=\lim_{n\to\infty}\langle Tk', h_n\rangle=\langle Tk', h\rangle.
\end{equation*}
Therefore, $h\in\{ \ran T\}^\perp=N_S,$ which means that $k=\lim_{n\to\infty} Sh_n=0,$ as required.

Statement $(i)(a)$ follows by $(ii)(a)$ which, by $(i)(b)$, takes the form $S\subseteq T^*$. Since $T^*$ is closed we also have $\bar{S}\subseteq T^*$. At the same time we know that $R_T=\ran T^*$ (by \eqref{eq0}), $Q_S=\ran \bar{S}$ and $N_S=\ker \bar{S}$ (by Remark \ref{r47p}). Consequently, our hypotheses can be rewritten as $\ker T^*=\ker \bar{S}$ (condition $(ii)(b)$) and $\ran T^*=\ran\bar{S}$ (condition $(ii)(c)$). Finally, we get the condition $\bar{S}=T^*$ (statement $(i)(c)$) by Proposition \ref{p1.1}. 
\end{proof}

We present, as consequences, some other characterizations for essentially selfadjoint operators.

\begin{corollary}\label{c21}
Let $S$ be a linear operator acting on the Hilbert space $\mathscr{H}.$ The following statements are equivalent:

\noindent
$(i)$ $S$ is essentially selfadjoint;

\noindent
$(ii)$
\begin{itemize}
\item[$(a)$] $S \wedge S$;
\item[$(b)$] $\{ \ran S\}^\perp=N_S;$
\item[$(c)$] $Q_S=R_S.$
\end{itemize}
\end{corollary}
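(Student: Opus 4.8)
The plan is to deduce Corollary \ref{c21} directly from Theorem \ref{t20} by specializing $T=S$, exactly in the way the analogous Corollaries \ref{c16}--\ref{c19} are obtained from Theorem \ref{t14}. The only subtlety is that Theorem \ref{t20} as stated has three hypotheses in part $(i)$ --- $S$ closable, $T$ densely defined, $\bar S=T^*$ --- whereas essential selfadjointness of $S$ is the single condition ``$S$ is densely defined, closable and $\bar S=\bar S^{*}$''. So the first step is to check that, when $T=S$, the triple $(i)(a)$--$(c)$ of Theorem \ref{t20} is equivalent to ``$S$ is essentially selfadjoint''.

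First I would argue the forward direction: if $S$ is essentially selfadjoint then by definition $S$ is densely defined, closable, and $\bar S$ is selfadjoint. Since $S$ is densely defined, $T=S$ is densely defined, giving $(i)(b)$; $S$ is closable, giving $(i)(a)$; and $\bar S=(\bar S)^{*}=S^{*}=T^{*}$ (using that the adjoint of an operator equals the adjoint of its closure, i.e. $S^{*}=\bar S^{*}$ for densely defined $S$), giving $(i)(c)$. For the converse, suppose the triple holds with $T=S$: then $S$ is densely defined (by $(i)(b)$), closable (by $(i)(a)$), and $\bar S=S^{*}$. Taking adjoints in $\bar S=S^{*}$ and using $\bar S^{*}=S^{*}$ together with $\bar S^{**}=\bar S$ (valid since $\bar S$ is closed and densely defined --- its domain contains $\dom S$ which is dense) yields $\bar S=\bar S^{*}$, i.e. $\bar S$ is selfadjoint, so $S$ is essentially selfadjoint. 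Hence condition $(i)$ of Corollary \ref{c21} is equivalent to the $T=S$ instance of condition $(i)$ of Theorem \ref{t20}.

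Next, I would simply read off condition $(ii)$: with $T=S$, clause $(ii)(a)$ of Theorem \ref{t20}, $S\wedge T$, becomes $S\wedge S$; clause $(ii)(b)$, $\{\ran T\}^\perp=N_S$, becomes $\{\ran S\}^\perp=N_S$; and clause $(ii)(c)$, $Q_S=R_T$, becomes $Q_S=R_S$. These are precisely clauses $(ii)(a)$--$(c)$ of Corollary \ref{c21}. Therefore the equivalence $(i)\Leftrightarrow(ii)$ of Theorem \ref{t20}, applied with $T=S$, gives exactly the asserted equivalence.

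I do not expect a genuine obstacle here; the corollary is a specialization. The one point requiring a line of care is the bookkeeping in the first step --- namely that ``$\bar S$ is selfadjoint'' is interchangeable with ``$\bar S=S^{*}$'' for a densely defined closable $S$, which rests on the standard identity $S^{*}=\bar S^{*}$ and on $\bar S$ being densely defined so that $\bar S^{**}=\bar S$. Once that is noted, the proof is a two-line invocation of Theorem \ref{t20}.
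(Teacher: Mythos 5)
Your proposal is correct and follows exactly the paper's own route: the paper proves Corollary \ref{c21} by the one-line remark ``It follows by Theorem \ref{t20}, the case $T=S$.'' Your additional bookkeeping --- checking that, for $T=S$, the triple ($S$ closable, $S$ densely defined, $\bar S=S^*$) is equivalent to essential selfadjointness via the identity $S^*=\bar S^{\,*}$ --- is accurate and merely makes explicit what the paper leaves implicit.
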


\begin{proof}
It follows by Theorem \ref{t20}, the case $T=S.$
\end{proof}

\begin{corollary}\label{c22}
Let $S$ be a closable linear operator acting on the Hilbert space $\mathscr{H}.$ The following statements are equivalent:

\noindent
$(i)$ $\ S$ is essentially selfadjoint;

\noindent
$(ii)$
\begin{itemize}
\item[$(a)$] $S \wedge S$;
\item[$(b)$] $(\ran S)^\perp=\ker \bar{S};$
\item[$(c)$] $R_S=\ran \bar{S}.$
\end{itemize}
\end{corollary}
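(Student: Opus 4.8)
The plan is to obtain this as a direct consequence of Corollary \ref{c21} (the case $T=S$ of Theorem \ref{t20}) combined with Remark \ref{r47p}, the point being that the standing hypothesis ``$S$ is closable'' allows us to translate the abstract objects $N_S$ and $Q_S$ appearing in Corollary \ref{c21} into the concrete ones $\ker\bar{S}$ and $\ran\bar{S}$ appearing in the present statement. Concretely, since $S$ is closable, the identity $\overline{G(S)}=G(\bar{S})$ underlying Remark \ref{r47p} gives $N_S=\ker\bar{S}$ and $Q_S=\ran\bar{S}$; with these substitutions the triple of conditions in Corollary \ref{c21}$(ii)$ becomes word for word the triple in $(ii)$ here.

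For the implication $(i)\Rightarrow(ii)$ I would first invoke Corollary \ref{c21}: essential selfadjointness of $S$ yields $S\wedge S$, $(\ran S)^\perp=N_S$ and $Q_S=R_S$. Then, rewriting $N_S=\ker\bar{S}$ and $Q_S=\ran\bar{S}$ via Remark \ref{r47p}, these become precisely $(ii)(a)$, $(ii)(b)$ and $(ii)(c)$. For the converse $(ii)\Rightarrow(i)$ I would run the same substitutions in reverse: using Remark \ref{r47p} we have $(\ran S)^\perp=\ker\bar{S}=N_S$ and $R_S=\ran\bar{S}=Q_S$, so conditions $(ii)(a)$--$(c)$ of Corollary \ref{c21} hold, and that corollary then gives that $S$ is essentially selfadjoint. (Alternatively one could go straight through Theorem \ref{t20} with $T=S$, rewriting $R_T=\ran T^*$, $N_S$ and $Q_S$ the same way; but Corollary \ref{c21} already packages exactly this.)

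I do not expect a genuine obstacle here: all the analytic content sits in Theorem \ref{t20} (hence in Proposition \ref{p1.1} and the graph computations of Section 2), and the corollary is a purely notational repackaging. The only point requiring a moment's care is to verify that Remark \ref{r47p} is legitimately applicable in both directions — which it is, since closability of $S$ is part of the hypotheses — and to note, for $(ii)\Rightarrow(i)$, that condition $(ii)(a)$ together with the rewritten $(b)$, namely $S\subseteq T^*=S^*$ once $S$ is seen to be densely defined, is what feeds Corollary \ref{c21}; density of $\dom S$ itself comes out of the argument rather than being assumed, exactly as in the proof of Theorem \ref{t20}.
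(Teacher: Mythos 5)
Your proposal is correct and coincides with the paper's own argument: the corollary is deduced from Corollary \ref{c21} by using Remark \ref{r47p} (valid since $S$ is closable) to rewrite $N_S=\ker\bar{S}$ and $Q_S=\ran\bar{S}$. Nothing further is needed.
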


\begin{proof}
It follows by Corollary \ref{c21} taking into account the fact that, under the assumption that $S$ is closable, $N_S=\ker \bar{S}$ and $Q_S=\ran \bar{S}.$
\end{proof}

\begin{corollary}\label{c23}
Let $S$ be a densely defined symmetric operator acting on the Hilbert space $\mathscr{H}.$ The following statements are equivalent:
\begin{itemize}
\item[$(i)$] $S$ is essentially selfadjoint;
\item[$(ii)$] $\ran S^*=\ran \bar{S}.$
\end{itemize}
\end{corollary}

\begin{proof}
In view of the previous corollary we only have to check that $\ran S^*=\ran\bar{S}$ implies $\ker S^*=\ker \bar{S}.$ This follows easily by the equalities:
\begin{equation*}
\ker S^*=\{ \ran S\}^\perp=\{ \ran \bar{S}\}^\perp=\{ \ran S^*\}^\perp=\ker S^{**}=\ker \bar{S}.
\end{equation*}
\end{proof}

\begin{acknowledgements}
The work of the first author has been supported by the Hungarian Scholarship Board.
\end{acknowledgements}


\begin{thebibliography}{00}
\bibitem{are61}
R. Arens, Operational calculus of linear relations, {\em Pacific J. Math.\/} {\bf 9} (1961), 9--23.

\bibitem{biso}
M.S. Birman and M.Z. Solomjak, {\em Spectral Theory of Self-Adjoint Operators in Hilbert Space,} D. Reidel Publ. Company, Dordrecht, Holland, 1987.

\bibitem{hkm}
V. Hardt, A. Konstantinov and R. Mennicken, On the spectrum of the product of closed operators, {\em Math. Nachr.\/} {\bf 215} (2000), 91--102.

\bibitem{HM}
V. Hardt and R. Mennicken, On the Spectrum of Unbounded Off-diagonal $2\times 2$ Operator Matrices in Banach Spaces, {\em Oper. Theory Adv. and Appl.}, vol. 124, Birkh\"auser Verlag, Basel, 2001, 243--266.

\bibitem{kat51}
T. Kato, Fundamental Properties of Hamiltonian Operators of Schr\"odinger type, {\em Trans. Amer. Math. Soc.\/} {\bf 70} (1951), 195--211.

\bibitem{M}
M. M\"oller, On the essential spectrum of a class of operators in Hilbert spaces, {\em Math. Nach.\/}, {\bf 194} (1998), 185--196.

\bibitem{neu30} 
J. von Neumann, Allgemeine Eigenwerttheorie Hermitescher Functionaloperatoren, {\em Math. Ann.\/} {\bf 102} (1929-1930), 49--131.

\bibitem{neu32} 
J. von Neumann, \"Uber adjungierte Funktionaloperatoren, {\em Annals of Math.\/} {\bf 33} (1932), 294--310.

\bibitem{prw}
F. Philipp, A.C.M. Ran and M. Wojtylak, Local definitizability of $T^{[*]}T$ and $TT^{[*]}$, {\em Integr. Equ. Oper. Theory\/} {\bf 71} (2011), 491--508.

\bibitem{rs80}
M. Reed and B. Simon, {\em Methods of Modern Mathematical Physics}, {\em Vol. 1: Functional Analysis}, Academic Press, San Diego, London, 1980.

\bibitem{risz} 
F. Riesz and B. Sz.-Nagy, {\em Functional Analysis\/}, Ungar, New York, 1980.

\bibitem{seb83} 
Z. Sebesty\'en, On ranges of adjoint operators in Hilbert space, {\em Acta Sci. Math. (Szeged)\/} {\bf 46} (1983), 295--298.

\bibitem{stess} 
Z. Sebesty\'en and Zs. Tarcsay, Characterizations of essentially selfadjoint operators, manuscript.

\bibitem{stsa} 
Z. Sebesty\'en and Zs. Tarcsay, Characterizations of selfadjoint operators, submitted.

\bibitem{SS}
J. Stochel and F.H. Szafraniec, {\em Unbounded operators and subnormality}, book manuscript.

\bibitem{stone}
M. H. Stone, {\em Linear Transformations in Hilbert Spaces and their Applications to Analysis\/}, Amer. Math. Soc. Colloq. Publ., vol. 15, Amer. Math. Soc. 1932.

\bibitem{sto32}
M. H. Stone, On One-Parameter Unitary Groups in Hilbert Space, {\em Ann. Math.\/} {\bf 33} (1932), 643--648.

\bibitem{T}
B. Thaller, {\em The Dirac equation}, Springer Verlag, Berlin, Heidelberg, New York, 1992.
\end{thebibliography}
\end{document}